\newtheorem{theorem}{Theorem}
\newtheorem{theoremn}{Theorem}[section] 
\newtheorem{lemman}[theoremn]{Lemma}
\newtheorem{corollaryn}[theoremn]{Corollary}
\newtheorem*{theorem*}{Theorem}
\newtheorem*{lemma*}{Lemma}
\theoremstyle{definition}
\newtheorem{definition}[theoremn]{Definition}
\newtheorem{remark}[theoremn]{Remark}
\newtheorem{example}[theoremn]{Example}
\newtheoremstyle{definition*}
{\topsep}
{\topsep}
{}
{0pt}
{\bfseries}
{.}
{ }
{\thmname{#1}\thmnumber{ #2}\thmnote{ (#3)}}
\theoremstyle{definition*}
\newtheorem*{definition*}{Definition}
\newtheorem*{remark*}{Remark}
\newtheorem*{claim*}{Claim}
\newtheorem*{corollary*}{Corollary}
\begin{document}
	\baselineskip 13pt
	
	\title[some special coprime actions]{some special coprime actions\\ and their  consequences}
	\author{G\"{u}l\.{I}n Ercan$^{*}$}
	\address{G\"{u}l\.{ı}n Ercan, Department of Mathematics, Middle East
		Technical University, Ankara, Turkey}
	\email{ercan@metu.edu.tr}
	
	\author{\.{I}sma\.{I}l \c{S}. G\"{u}lo\u{g}lu}
	\address{\.{ı}sma\.{ı}l \c{S}. G\"{u}lo\u{g}lu, Department of Mathematics, Do%
		\u{g}u\c{s} University, Istanbul, Turkey}
	\email{iguloglu@dogus.edu.tr}
	
	\author{M. Yas\.{ı}r Kizmaz}
	\address{M. Yas\.{ı}r Kizmaz, Department of Mathematics, Bilkent University, Ankara, Turkey}
	\email{yasirkizmaz@bilkent.edu.tr}
	
	\author{Danila O. Revin}
	\address{Danila O. Revin, Sobolev Institute of Mathematics SB RAS}
	\email{revin@math.nsc.ru}
	\thanks{$^{*}$Corresponding author}
	\subjclass[2000]{20D10, 20D15, 20D45}
	\keywords{coprime action, $p$-length, Fitting series, Frobenius action, simple group}
	
	\maketitle
	
	\begin{abstract} Let a  group $A$ act on the group $G$ coprimely. Suppose that the order of the fixed point subgroup $C_G(A)$ is not divisible by an arbitrary but fixed prime $p$. In the present paper we determine bounds for the $p$-length of the group $G$ in terms of the order of $A$, and investigate how some $A$-invariant $p$-subgroups are embedded in $G$ under various additional assumptions.
		
	\end{abstract}
	
	\section{introduction}
	
	All groups considered are finite. Let a group $A$ act on the group $G$. The nature of this action has some radical consequences on the structures of both $G$ and $A$ and also leads to some bounds for the invariants of both in terms of the other's. So much research is devoted to studying coprime action, that is the case $(|G|,|A|)=1$ due to the existence of well known nice relations between $G$ and $A$. The present paper is concerned with the consequences of some coprime actions with the additional condition common to all of them such that the order of the fixed point subgroup $C_G(A)$ is not divisible by an arbitrary but fixed prime $p$. In Section 2 we handle the case where $A$ acts with regular orbits, that is, for every proper subgroup $B$ of $A$ and every elementary abelian $B$-invariant section $S$ of $G,$ there exists some $v\in S$ such that $C_B(v)=C_B(S)$; and bound the $p$-length of the group $G$. Namely, we prove
	\vspace{0,2cm}
	
	\begin{theorem}\label{A} Let $A$ be a group acting coprimely and with regular orbits on the solvable group $G$.  Suppose that $B$ is a subgroup of $A$ such that $\bigcap_{a\in A} [G,B]^a=1.$ If $p$ is a prime not dividing $|C_G(A)|$ then $\ell_p(G)\leq \ell(A:B)$.
	\end{theorem}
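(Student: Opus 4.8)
The plan is to argue by induction on $|G|$, peeling off one layer of the upper $p$-series at a time and charging each $p$-layer against one prime factor of the index. Throughout I read $\ell(A:B)$ as the number of prime divisors of $|A:B|$ counted with multiplicity; the only features I use are that it vanishes exactly when $B=A$ and that it drops by one when $B$ is enlarged to an overgroup of prime index. The base case $B=A$ is immediate: since $[G,A]$ is $A$-invariant, $\bigcap_{a\in A}[G,A]^a=[G,A]$, so the hypothesis forces $[G,A]=1$, i.e. $G=C_G(A)$; then $p\nmid|C_G(A)|=|G|$ gives $\ell_p(G)=0$. More generally I would first record the inequality $\ell_p(G)\le\ell_p(C_G(B))$, valid whenever $\bigcap_{a\in A}[G,B]^a=1$. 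Indeed, $[G,B]^a=[G,B^a]$, so the $A$-conjugates of the normal subgroup $[G,B]$ have trivial intersection and $G$ embeds subdirectly into $\prod_{a}G/[G,B^a]$; since $p$-length is monotone under subgroups and equals the maximum over direct factors, and all factors are isomorphic, $\ell_p(G)\le\ell_p(G/[G,B])$, and the coprime identity $G=C_G(B)[G,B]$ exhibits $G/[G,B]$ as a quotient of $C_G(B)$.

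For the inductive step I would reduce to a single $p$-chief layer. Assuming $\ell_p(G)\ge1$ and carrying out the standard Hall--Higman reductions, I would take $V$ to be a minimal $A$-invariant normal $p$-subgroup of $G$ (so that $\ell_p(G)\le\ell_p(G/V)+1$). Two facts then drive the descent. First, because $C_G(A)$ is a $p'$-group and the action is coprime, $C_V(A)$ is the image of a $p'$-subgroup inside the $p$-group $V$, hence $C_V(A)=0$: thus $A$ acts fixed-point-freely on the nonzero $\mathbb F_pA$-module $V$. This is the one and only place the hypothesis on $C_G(A)$ enters. Second, since $V$ is a genuine subgroup, the intersection hypothesis descends to $V$ in module form: from $[V,B^a]\le[G,B^a]=[G,B]^a$ and $\bigcap_a[G,B]^a=1$ we get $\bigcap_{a\in A}[V,B^a]=0$.

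Next I would use the regular-orbit hypothesis to convert this module data into the sought enlargement of $B$. Applying it to the proper subgroup $B$ (and its conjugates) acting on the elementary abelian section $V$ produces regular vectors, whose $B$-stabiliser is exactly $C_B(V)$; combined with $C_V(A)=0$ and $\bigcap_a[V,B^a]=0$ this should isolate a single prime $q\mid|A:B|$ and yield a subgroup $B_1$ with $B\le B_1\le A$ and $\ell(A:B_1)=\ell(A:B)-1$ accounting for the layer $V$. With such a $B_1$ in hand, and after checking that coprimeness and the regular-orbit property are inherited by $\bar G:=G/V$, that $C_{\bar G}(A)$ is still a $p'$-group, and crucially that $\bigcap_a[\bar G,B_1]^a=1$, induction gives $\ell_p(\bar G)\le\ell(A:B_1)=\ell(A:B)-1$, whence $\ell_p(G)\le\ell_p(\bar G)+1\le\ell(A:B)$. (A $p'$-layer would be peeled in the same way but with $B_1=B$, charging no prime.)

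The main obstacle is the final verification, that the condition $\bigcap_a[\bar G,B_1]^a=1$ survives passage to the quotient $\bar G$, and indeed the companion reduction to $O_{p'}(G)=1$. Intersections of conjugate subgroups do not pass to quotients in general, and a short analysis localises the failure to \emph{same-prime} offenders: an $A$-invariant normal subgroup $\bar K\le[\bar G,B_1]$ whose prime coincides with the one being factored out. When the primes differ one can pull $\bar K$ back, split off the relevant Sylow as a characteristic, hence $A$-invariant, subgroup lying in $[G,B_1]$, and contradict $\bigcap_a[G,B_1]^a=1$; the same-prime case resists exactly this manoeuvre. Ruling out such offenders, equivalently constructing $B_1$ so that $V$ is genuinely the top of what $B_1$ controls, is precisely the role of the regular-orbit hypothesis, and I expect this matching of one $p$-layer to one prime factor of $|A:B|$, together with the stability of all hypotheses under the reduction, to be the technical heart of the proof.
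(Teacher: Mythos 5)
There is a genuine gap, and you have in fact flagged it yourself: the two steps you defer --- producing the overgroup $B_1$ with $\ell(A{:}B_1)=\ell(A{:}B)-1$ ``accounting for'' the layer $V$, and verifying that $\bigcap_{a\in A}[\overline{G},B_1]^a=1$ survives passage to $\overline{G}=G/V$ --- are precisely the content of the theorem, not technical bookkeeping. Nothing in your sketch supplies a mechanism: from $C_V(A)=1$ and $\bigcap_a[V,B^a]=1$ you assert that regular vectors ``should isolate a single prime $q\mid|A{:}B|$,'' but no construction of $B_1$ is given, nor even a statement of what property $B_1$ must have relative to $V$ for the induction on $G/V$ to close. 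Your same-prime/different-prime analysis of the quotient obstruction correctly identifies that intersections of conjugates do not pass to quotients, but then again defers its resolution to the regular-orbit hypothesis without an argument. A secondary issue: you read $\ell(A{:}B)$ as $\Omega(|A{:}B|)$, whereas the paper defines it as the longest chain of subgroups from $B$ to $A$; these differ in general (if $B$ is maximal of composite index, $\ell(A{:}B)=1<\Omega(|A{:}B|)$). For your induction only the chain-length fact that any strict enlargement of $B$ drops $\ell(A{:}\cdot)$ by at least one is needed, so this is repairable, but the per-prime accounting language suggests a matching that the chain-length definition does not support.

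It is also worth noting that the paper's proof is structured in the opposite direction and never needs the inheritance you could not establish. It takes a counterexample minimizing $|GA|+|A{:}B|$, reduces to $O_{p'}(G)=1$, and uses Lemma 2.1 together with Turull's machinery to extract an \emph{irreducible} $A$-tower $S_1,\ldots,S_t$ with exactly $\ell_p(G)$ $p$-terms and $G=\prod S_i$. The induction is then applied not to $G/V$ with the same hypothesis, but to quotients of subgroups such as $\overline{H}=H/C_H(R/\Phi(R))$ paired with \emph{larger} subgroups $C,D>B$: the hypothesis $\bigcap_a[\overline{H},C]^a=1$ is re-established there via the three subgroups lemma and the irreducibility of $R/\Phi(R)$ as an $HA$-module (which gives $\bigcap_a R_0^a=\Phi(R)$), yielding the generation statements $R=[R,C]^H$ for all $C>B$ and $Q=[Q,D]^K\Phi$ for all $D$ with $\ell(D{:}B)\geq 2$. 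The regular-orbit hypothesis enters only once, at the very end, through Turull's fixed-point theorem \cite[Theorem 1.1]{Tu} applied to the action of $S_{t-1}S_{t-2}A$ on the Frattini factor of $S_t$, producing $C_{S_t}(A)\neq 1$ and contradicting $p\nmid|C_G(A)|$. Your correct preliminary observations (the base case $B=A$, the subdirect embedding giving $\ell_p(G)\leq\ell_p(C_G(B))$, and $C_V(A)=1$) do not substitute for this apparatus, and the bottom-up layer-peeling plan as written does not close.
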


	Here, $\ell(A:B)$ denotes the length (by the number of inclusions) of the longest chain of subgroups of $A$ that starts with $B$ and ends with $A$. Simply we use $\ell(A)$ instead of $\ell(A:1)$.	The proof of Theorem A, one immediate consequence of which is presented below, involves a series of reductions similar to the techniques used in the proof of Theorem 2.1 of \cite{Tu}.
	\vspace{0,2cm}
	
	\begin{corollary*} Let $A$ be a group acting coprimely and with regular orbits on the solvable group $G$. For any prime $p$ not dividing $|C_G(A)|$ we have $\ell_p(G)\leq \ell(A)$.
	\end{corollary*}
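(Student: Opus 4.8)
The plan is to derive the corollary as an immediate special case of Theorem A by choosing $B=1$. With $B=1$, the commutator $[G,B]=[G,1]=1$, so the hypothesis $\bigcap_{a\in A}[G,B]^a=1$ holds trivially since the intersection is of copies of the trivial subgroup. The remaining hypotheses of Theorem A—that $A$ acts coprimely and with regular orbits on the solvable group $G$, and that $p$ does not divide $|C_G(A)|$—are exactly the hypotheses of the corollary. Hence Theorem A applies and yields $\ell_p(G)\leq \ell(A:1)$.

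It then remains only to translate the notation. By the convention stated immediately after Theorem A, one writes $\ell(A)$ in place of $\ell(A:1)$, where $\ell(A:1)$ is the length of the longest chain of subgroups of $A$ running from the trivial subgroup up to $A$. Thus $\ell_p(G)\leq \ell(A:1)=\ell(A)$, which is precisely the asserted bound.

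Since the entire argument is just the instantiation $B=1$, there is essentially no obstacle: the only point requiring a moment's care is verifying that the intersection condition is vacuous for the trivial subgroup, which is immediate because $[G,1]=1$. I would therefore present the proof in one or two sentences, noting that taking $B=1$ in Theorem A gives the conclusion directly.
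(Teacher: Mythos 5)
Your proposal is correct and matches the paper's intent exactly: the paper presents this corollary as an immediate consequence of Theorem~\ref{A}, and the derivation is precisely the instantiation $B=1$, for which $[G,1]=1$ makes the intersection hypothesis vacuous and $\ell(A:1)=\ell(A)$ by the stated convention. Nothing is missing.
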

	
	Recall that the coprime action of $A$ guarantees the existence of  $A$-invariant Sylow subgroups. In \cite{Y} Kızmaz  studied the structure of a group $G$ admitting a coprime automorphism $\alpha$ such that $G$ has a unique $\alpha$-invariant Sylow $p$-subgroup for a prime $p$ where $C_G(\alpha)$ is a $p'$-group and also asked about replacing $\alpha$ by any subgroup $A$ of $ AutG$. The rest of the paper is concerned with this kind of extensions. More precisely, we consider the situation that $G$ contains a unique $A$-invariant Sylow $p$-subgroup where $C_G(A)$ is a $p'$-group. It should be noted that $G$ contains a unique $A$-invariant Sylow $p$-subgroup $P$ if and only if $C_G(A)$ normalizes $P.$ A first partial answer in this direction is the second result of Section 2 which bounds the $p$-length of $G$ by  $\ell(A)$ in the case where $A$ is abelian and $G$ is $p$-separable. Namely we have

		\begin{theorem}\label{B} Let $A$ be an abelian group acting coprimely on the $p$-separable group $G$. Suppose that $G$ contains a unique $A$-invariant Sylow $p$-subgroup and that $C_G(A)$ is a $p'$-group. Then $\ell_p(G)\leq \ell(A)$.
		\end{theorem}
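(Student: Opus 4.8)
The plan is to induct on $|G|$ and build the bound one $p$-layer at a time along the upper $p$-series. First I would clear the trivial cases (when $\ell_p(G)=0$, or when $A=1$, in which case $C_G(A)=G$ is a $p'$-group), and then carry out the standard reduction to $O_{p'}(G)=1$: passing to $\bar G=G/O_{p'}(G)$ preserves $\ell_p$, and because the action is coprime it preserves every hypothesis. Indeed $C_{\bar G}(A)=C_G(A)O_{p'}(G)/O_{p'}(G)$ is again a $p'$-group, and the images of the $A$-invariant Sylow $p$-subgroups of $G$ are exactly the $A$-invariant Sylow $p$-subgroups of $\bar G$, so uniqueness (equivalently $C_{\bar G}(A)\le N_{\bar G}(\bar P)$) is inherited. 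After this reduction $Q:=O_p(G)\ne 1$ and, by the $p$-constraint of a $p$-separable group, $C_G(Q)\le Q$. The observation that drives everything is that for every $A$-invariant $p$-chief factor $V=M/N$ of $G$ one has $C_V(A)=1$: by coprimeness $C_V(A)=C_M(A)N/N$ is a section of $C_M(A)\le C_G(A)$, hence a $p'$-group, while $V$ is a $p$-group, so $C_V(A)=1$. Thus $A$ acts on each $p$-chief factor without trivial constituents, and since $A$ is abelian and the action coprime, each such factor decomposes over a splitting field into nontrivial linear characters of $A$.

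Since $\ell_p(G)=\ell_p(G/Q)+1$, the inductive goal reduces to finding a \emph{proper} subgroup $B<A$ for which $G/Q$ satisfies the hypotheses of the theorem with $B$ in place of $A$; for then induction gives $\ell_p(G/Q)\le \ell(B)$, and properness forces $\ell(B)\le \ell(A)-1$ because $A$ is abelian, so $|B|$ has strictly fewer prime factors counted with multiplicity. The two hypotheses to be re-established for $(G/Q,B)$ are that $C_{G/Q}(B)$ is a $p'$-group and that $G/Q$ has a unique $B$-invariant Sylow $p$-subgroup. The first I would control through the product formula $|C_{G/Q}(B)|=\prod_V |C_V(B)|$, taken over the $A$-chief factors $V$ of $G/Q$: then ``$C_{G/Q}(B)$ is a $p'$-group'' is equivalent to $C_V(B)=1$ for every $p$-chief factor $V$ of $G$ lying above $Q$. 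By the previous paragraph each such $V$ is a sum of nontrivial characters $\chi$ of $A$, so choosing $B$ with $\chi|_B\ne 1$ for all these $\chi$ — that is, $B$ not contained in any $\ker\chi$ — secures $C_V(B)=1$ on all layers simultaneously, with a maximal subgroup $B$ avoiding the finitely many maximal kernels as the natural candidate.

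The hard part is exactly the existence of such a $B$, together with the second condition. On the one hand, the characters occurring on the $p$-chief factors above $Q$ may have kernels exhausting all subgroups of prime index (already for $A\cong C_q\times C_q$, when every ``direction'' occurs), so that no single maximal subgroup satisfies $C_V(B)=1$ on all layers at once; on the other hand, even when $C_{G/Q}(B)$ is a $p'$-group, the unique-invariant-Sylow condition for $B$ (namely $C_{G/Q}(B)\le N_{G/Q}(\bar P_B)$) need not come for free, since a proper subgroup may admit several invariant Sylow $p$-subgroups. Overcoming both requires the Turull-style bookkeeping of \cite{Tu} that underlies Theorem A: rather than peel one $p$-layer against one maximal subgroup, I would prove an apparently stronger statement carrying a parameter subgroup $B\le A$ — in the spirit of Theorem A, bounding $\ell_p(G)$ by $\ell(A:B)$ under a condition of the form $\bigcap_{a\in A}[G,B]^a=1$ rather than by the unique-Sylow hypothesis directly — so that the diversity of character-directions on the $p$-chief factors is charged against a correspondingly long chain in $A$. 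Establishing this parameterized form and propagating it correctly through the reduction to $O_{p'}(G)=1$ and the passage to $G/Q$, while keeping the condition $C_G(A)\le N_G(P)$ intact, is the central difficulty; the abelianness of $A$ and the fixed-point-free action on $p$-chief factors are precisely what make the charging possible.
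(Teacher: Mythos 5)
Your reductions are fine as far as they go (passing to $G/O_{p'}(G)$, and the observation that $A$ acts fixed-point-freely on every $p$-chief factor), but the proposal stops exactly where the work begins, and you say so yourself: no proper subgroup $B<A$ need satisfy $C_V(B)=1$ on all $p$-chief factors above $O_p(G)$ simultaneously (already for $A\cong C_q\times C_q$ the kernels can exhaust all maximal subgroups), and even when such a $B$ exists, the unique-invariant-Sylow hypothesis is not inherited by $B$. The last paragraph replaces this missing step by an appeal to an unstated ``parameterized form'' in the spirit of Theorem A, which is neither formulated nor proved; nor can Theorem A itself be invoked, since it assumes $G$ solvable and $A$ acting with regular orbits, and its hypothesis $\bigcap_{a\in A}[G,B]^a=1$ has no visible connection to the unique-invariant-Sylow condition of Theorem B. So this is a genuine gap: the entire charging mechanism that the bound rests on is absent.

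The missing idea is that the induction should descend in the other variable: not a subgroup $B\le A$ acting on the quotient $G/O_p(G)$, but the quotient $A/\langle a\rangle$ acting on a fixed-point subgroup of $G$. The paper's proof takes the Kurzweil series $O_i(G)$ for $\pi=\{p\}$ and the associated sequence $A_1,\ldots,A_{\ell(G)}$ of $A$-invariant sections from \cite[Lemma 4.3]{Ku}, whose top section $A_1$ is a $p$-group with $C_{A_1}(A)=1$; it then picks an element $a\in A$ of prime order with $[A_1,a]\ne 1$ and applies Turull's theorem on fixed points of towers, \cite[Theorem 3.1]{T}, to conclude that $C_{O_{\ell(G)}(G)}(a)$ still carries an $A$-tower with $\ell_p(G)-1$ many $p$-terms, hence has $p$-length at least $\ell_p(G)-1$. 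Abelianness of $A$ is used precisely here: $\langle a\rangle\lhd A$, so $C_{O_{\ell(G)}(G)}(a)$ is $A$-invariant and $A/\langle a\rangle$ acts on it coprimely; moreover $C_{C_G(a)}(A)=C_G(A)$ is still a $p'$-group and still normalizes an invariant Sylow $p$-subgroup, so the hypotheses pass to this pair and induction gives $\ell_p(G)-1\le\ell(A)-1$. This sidesteps both of the obstructions you identified: one never needs to kill the fixed points of a single proper subgroup on every $p$-chief factor at once, and one never needs to re-establish uniqueness of an invariant Sylow subgroup for a subgroup of $A$.
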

	
	Section 3 is devoted to the extensions of some results of \cite{Y} and firstly includes the following answer to its Question 4.1 when $A$ is abelian.

		\begin{theorem}\label{C} Let $A$ be an abelian group acting coprimely  on the solvable group $G$. Suppose that $G$ contains a unique $A$-invariant Sylow $p$-subgroup $P$ for an odd prime $p$ where $C_G(A)$ is a $p'$-group. Then  $P\leq F_{2\ell(A)}(G)$.
		\end{theorem}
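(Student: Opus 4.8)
The plan is to separate the statement into two independent pieces: a bound on the $p$-length supplied by the earlier results, and a general fact about where a Sylow $p$-subgroup of a $p$-solvable group sits in the Fitting series. First I would invoke Theorem B. Since $G$ is solvable it is $p$-separable, and $A$ is abelian acting coprimely, $G$ has a unique $A$-invariant Sylow $p$-subgroup $P$, and $C_G(A)$ is a $p'$-group; thus all hypotheses of Theorem B hold and we get $\ell_p(G)\le \ell(A)$. As the Fitting series is increasing, $F_{2\ell_p(G)}(G)\le F_{2\ell(A)}(G)$, so it suffices to prove the sharper-looking containment $P\le F_{2\ell_p(G)}(G)$.

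The heart of the matter is then the following purely Fitting-theoretic claim, which I would prove by induction on $|G|$: if $G$ is $p$-solvable and $Q$ is any Sylow $p$-subgroup of $G$, then $Q\le F_{2\ell_p(G)}(G)$. The case $\ell_p(G)=0$ (so $Q=1$) is trivial, and if $F_2(G)=G$ then $Q\le G=F_2(G)\le F_{2\ell_p(G)}(G)$ once $\ell_p(G)\ge 1$; otherwise $F_2(G)$ is a proper nontrivial normal subgroup. The key structural input is that $O_{p'p}(G)\le F_2(G)$: indeed $F_1(G)=O_p(G)O_{p'}(G)\le O_{p'p}(G)$, and $O_{p'p}(G)/F_1(G)$ is a normal $p$-subgroup of $G/F_1(G)$, hence lies in $O_p(G/F_1(G))\le F(G/F_1(G))=F_2(G)/F_1(G)$. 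Passing to $\overline{G}=G/F_2(G)$, solvability and the Sylow property are inherited by $\overline{Q}=QF_2(G)/F_2(G)$, while, because $p$-length does not increase under homomorphic images and $O_{p'p}(G)\le F_2(G)$, we have $\ell_p(\overline G)\le \ell_p(G/O_{p'p}(G))=\ell_p(G)-1$. By induction $\overline Q\le F_{2\ell_p(\overline G)}(\overline G)\le F_{2(\ell_p(G)-1)}(\overline G)$, and the standard identity $F_i(G/F_2(G))=F_{i+2}(G)/F_2(G)$ rewrites the right-hand side as $F_{2\ell_p(G)}(G)/F_2(G)$. Hence $QF_2(G)\le F_{2\ell_p(G)}(G)$, so $Q\le F_{2\ell_p(G)}(G)$, closing the induction.

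Applying this claim to $G$ and $P$ and combining with the first step yields $P\le F_{2\ell_p(G)}(G)\le F_{2\ell(A)}(G)$, which is the assertion of Theorem C. I expect the main obstacle to be the clean verification that the Fitting series absorbs each $p$-layer of the upper $p$-series within two steps, concretely the inequality $O_{p'p}(G)\le F_2(G)$ together with correct bookkeeping of the Fitting indices under $G/F_2(G)$; once this is in hand the induction is immediate. In this route the distinctive hypotheses of the theorem, namely the uniqueness of the $A$-invariant Sylow $p$-subgroup and the condition that $C_G(A)$ be a $p'$-group, enter only through Theorem B, where they serve to control $\ell_p(G)$, while the passage from the $p$-length bound to the Fitting bound is general and is exactly what costs the factor $2$. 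The oddness of $p$ does not seem to be needed for this particular argument and is presumably retained to match Question~4.1 of \cite{Y}; if a bound sharper than $2\ell(A)$ were the goal, I would expect oddness to enter through a finer, Hall--Higman-type analysis of the individual $p$-layers.
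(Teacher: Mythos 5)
Your first step is fine: Theorem~B does apply under the hypotheses of Theorem~C and gives $\ell_p(G)\le\ell(A)$. But the ``purely Fitting-theoretic claim'' on which the rest of your argument stands is false, and the error sits exactly where you located the ``main obstacle'': in the assertion $O_{p',p}(G)\le F_2(G)$. Your justification uses the identity $F_1(G)=O_p(G)O_{p'}(G)$, which does not hold in general, because $O_{p'}(G)$ need not be nilpotent and hence need not lie in $F(G)$; the true containment goes the other way, $F(G)\le O_p(G)\times O_{p'}(G)$. Consequently $O_{p',p}(G)/F_1(G)$ need not be a $p$-group. A concrete counterexample to your inductive lemma: take $G=S_3\wr C_5$ (regular wreath product, base $B=S_3^5$, with a generator $\sigma$ of $C_5$ permuting the coordinates cyclically) and $p=5$. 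Then $O_{5'}(G)=B$ and $G/B\cong C_5$, so $\ell_5(G)=1$; but $F_1(G)=C_3^5$, $F_2(G)=B$, and the Sylow $5$-subgroup $\langle\sigma\rangle$ first appears in $F_3(G)=G$. So a Sylow $5$-subgroup of this solvable group is not contained in $F_{2\ell_5(G)}(G)$. Worse, the defect is unbounded: replacing $S_3$ by any $p'$-group $H$ of Fitting height $h$ gives $G=H\wr C_p$ with $\ell_p(G)=1$, $F_i(G)=F_i(H)^p$ for $i\le h$, and the Sylow $p$-subgroup contained in $F_{h+1}(G)$ but in no earlier term. Thus no function of $\ell_p(G)$ alone can bound the Fitting level that captures a Sylow $p$-subgroup: the $p'$-layers of the upper $p$-series, which $p$-length ignores, each cost Fitting steps that your bookkeeping does not see.

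This also means the strategy, not just the lemma, is unrepairable: you funnel all the special hypotheses (uniqueness of the $A$-invariant Sylow $p$-subgroup, $C_G(A)$ a $p'$-group) through Theorem~B into the single inequality $\ell_p(G)\le\ell(A)$ and then discard them, whereas the example shows that a $p$-length bound can never by itself yield $P\le F_{2\ell(A)}(G)$. The hypotheses must enter the embedding argument directly, and that is exactly how the paper proceeds: it inducts on $|GA|$, supposes $p$ divides $|F_k(G)/F_{k-1}(G)|$ for some $k>2\ell(A)$, extracts an irreducible $A$-tower $S_1,\dots,S_{2\ell(A)+1}$ with $S_1$ a $p$-group, uses the hypotheses to force $C_V(A)=1$ for $V=P_2/\Phi(P_2)$ (via $[C_{P_2}(A),P_1]=1$), deduces from Frobenius fixed-point theory that $C_{P_1}(a)\ne 1$ for some $1\ne a\in A$, and then applies Turull's fixed-point theorem \cite[Theorem 3.1]{T} together with induction on the action of $A/\langle a\rangle$ on appropriate fixed-point subgroups to reach a contradiction; Theorem~B is never invoked. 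Incidentally, this also corrects your closing speculation: the oddness of $p$ is not decorative, since it is needed in the application of \cite[Theorem 3.1]{T} (compare the proof of Theorem~D, where this is made explicit).
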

	
	The next result extends Proposition 3.1 in \cite{Y} for arbitrary $A$ under some additional assumptions.

		\begin{theorem}\label{D} Let $A$ be a group acting coprimely on the group $G$ and let $P$ be an
		$A$-invariant Sylow $p$-subgroup of $G$ for a prime $p$ dividing the order of $G$. Suppose
		that $p$ does not divide $|C_G(A)|$ and that $C_G(a)$ normalizes $P$ for all $1 \ne a \in A$  of
		prime order. If $|GA|$ is odd then $P\leq F_2(G)$. Furthermore, $P\leq F(G)$ if $A$ is of prime order and $G$
		is solvable.
	\end{theorem}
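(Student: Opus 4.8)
The plan is to induct on $|G|$ in both parts, exploiting that every hypothesis is inherited by $A$-invariant sections. First I would record the driving facts. Since $C_P(A)\le C_G(A)$ is a $p$-group and $p\nmid|C_G(A)|$, we have $C_P(A)=1$; since $C_G(A)\le C_G(a)\le N_G(P)$ for any prime-order $a\in A$, the group $P$ is the unique $A$-invariant Sylow $p$-subgroup. For an $A$-invariant normal subgroup $N$ the coprime fixed-point formula gives $|C_{G/N}(A)|=|C_G(A)|/|C_G(A)\cap N|$, so $p\nmid|C_{G/N}(A)|$ and $C_{\overline P}(A)=1$ persists; moreover $C_{G/N}(a)=C_G(a)N/N$ normalizes $PN/N$. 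Hence all hypotheses pass to $G/N$. Finally, for the first assertion the Feit--Thompson theorem turns the parity hypothesis into solvability of $GA$, so everything takes place in solvable groups.

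For the furthermore part, with $A=\langle a\rangle$ of prime order $q$ and $G$ solvable, the target is $P\trianglelefteq G$ (equivalently $P\le F(G)$). I would take a minimal $A$-invariant normal subgroup $N$ of $G$; it is elementary abelian. If $N$ is a $p$-group then $N\le O_p(G)\le P$, and induction applied to $G/N$ gives $P/N\trianglelefteq G/N$, whence $P\trianglelefteq G$. If $N$ is a $p'$-group, induction gives $PN/N\trianglelefteq G/N$, i.e. $PN\trianglelefteq G$, and it remains to show $[P,N]=1$, for then $P$ is characteristic in $PN\trianglelefteq G$. Since $a$ generates $A$ we have $C_P(a)=C_P(A)=1$, so $PA$ is a Frobenius group with kernel $P$ and complement $A$ acting coprimely on the elementary abelian group $N$. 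From $C_N(A)\le C_G(A)\le N_G(P)$ and $N_N(P)=C_N(P)$ we get $C_N(A)\le C_N(P)$; on $W:=[N,P]$ the coprime splitting $N=C_N(P)\times[N,P]$ yields $C_W(P)=1$, while $C_W(A)\le C_N(A)\cap W\le C_N(P)\cap W=1$. The Frobenius fixed-point lemma (if a Frobenius group with kernel $F$ and complement $H$ acts coprimely on a nontrivial $V$ with $C_V(F)=1$ then $C_V(H)\ne1$) now forces $W=1$, i.e. $[N,P]=1$, and the induction closes.

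For the main assertion it suffices to prove $\widehat P\trianglelefteq\widehat G$, where $\widehat G:=G/F(G)$ and $\widehat P:=PF(G)/F(G)$, because a normal $p$-subgroup of $\widehat G$ lies in $O_p(\widehat G)\le F(\widehat G)=F_2(G)/F(G)$. By the first paragraph $\widehat G$ again satisfies all hypotheses. After the routine reduction to $O_p(\widehat G)=1$, the relation $C_{\widehat G}(F(\widehat G))\le F(\widehat G)$ makes $\widehat P$ act faithfully on $L:=O_{p'}(\widehat G)$, and proving $\widehat P=1$ is the goal. The decisive local consequence of the element-wise hypothesis is that $\widehat P$ centralizes $C_L(a)$ for every prime-order $a\in A$: indeed $C_L(a)\le C_{\widehat G}(a)\le N_{\widehat G}(\widehat P)$, so $[C_L(a),\widehat P]\le L\cap\widehat P=1$. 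Thus $\widehat P$ centralizes $\langle C_L(a): a\in A\text{ of prime order}\rangle$, and if this subgroup exhausts $L$ then faithfulness gives $\widehat P=1$.

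The main obstacle is to turn ``$\widehat P$ centralizes each $C_L(a)$'' into ``$\widehat P$ centralizes $L$''. The clean case is when $L$ is generated by the $C_L(a)$ with $a$ of prime order, a coprime generation phenomenon available for suitably non-cyclic $A$; the genuinely delicate configurations are the cyclic ones, where the prime-order elements all lie in a single minimal subgroup and generation fails. There I would fall back on the Frobenius fixed-point lemma applied to the bottom layer $\langle a\rangle$ of prime order, and then climb a chief series of $A$ by induction on $\ell(A)$, running the same family of reductions as in the proof of Theorem~\ref{A} (in the spirit of Theorem~2.1 of \cite{Tu}) to propagate triviality of the action upward. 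Reconciling the generation argument with the Frobenius argument uniformly across all odd-order $A$---and discharging the reduction that secures the faithful action of $\widehat P$ on the $p'$-section---is the step I expect to be hardest.
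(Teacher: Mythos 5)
Your ``furthermore'' argument ($A$ of prime order, $G$ solvable) is correct and complete, and it takes a genuinely different route from the paper: the paper looks at two successive $p$- and $p'$-terms of an $A$-tower and quotes Thompson's theorem on fixed-point-free automorphisms of prime order, while you pass to a minimal $A$-invariant normal subgroup $N$, reduce to showing $W=[N,P]=1$ when $N$ is an elementary abelian $p'$-group, and kill $W$ with the Frobenius fixed-point lemma applied to the Frobenius group $PA$ (the same standard lemma the paper itself invokes in the proof of Theorem \ref{separab}). Your supporting verifications --- $C_P(A)=1$, uniqueness of $P$, $N_N(P)=C_N(P)$, the coprime splitting $N=C_N(P)\times[N,P]$, and inheritance of all hypotheses by $G/N$ --- are sound, and this part is arguably more elementary and self-contained than the paper's.

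The main assertion ($|GA|$ odd implies $P\le F_2(G)$), however, is not proved, for two concrete reasons. First, your induction frame does not compile: unlike $P\le F(G)$, the statement $P\le F_2(G)$ does not close up under quotients, since applying the theorem inductively to $\widehat{G}=G/F(G)$ yields only $\widehat{P}\le F_2(\widehat{G})$, i.e. $P\le F_3(G)$, and your ``routine reduction to $O_p(\widehat{G})=1$'' has the same defect --- after factoring out $O_p(\widehat{G})$ the target becomes \emph{triviality} of the image of $P$, which is not an instance of the theorem being proved. Second, you yourself concede the crux: passing from ``$\widehat{P}$ centralizes each $C_L(a)$'' to ``$\widehat{P}$ centralizes $L$'' fails whenever the centralizers do not generate $L$, which happens not only for cyclic $A$ but for any $A$ with all Sylow subgroups cyclic (e.g. the nonabelian group of order $21$, perfectly admissible here since only $|GA|$ odd is assumed), and the fallback of ``climbing a chief series of $A$ as in Theorem \ref{A}'' is not an argument --- the reductions there lean on the regular-orbit hypothesis and the condition $\bigcap_{a\in A}[G,B]^a=1$, neither available in Theorem \ref{D}. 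The paper closes exactly this gap with machinery absent from your sketch: from an irreducible $A$-tower $S_1,S_2,S_3$ with $S_1$ a $p$-group it gets $C_{P_2}(a)\le\Phi(P_2)$ for all $1\ne a\in A$, so that $P_2A$ is Frobenius-like with kernel $P_2$, and \cite[Corollary C]{GE} applied to the action on $P_3$ forces $C_{P_3}(A)\ne1$ and hence $p_3\ne p$; then, since $P_1A$ cannot be Frobenius, some prime-order $b\in A$ has $C_{P_1}(b)\ne1$, and Turull's \cite[Theorem 3.1]{T} --- this is where oddness of $p$ is actually used --- yields $[C_{S_1}(b),C_{S_2}(b)]\ne1$ or $[C_{S_1}(b),C_{S_3}(b)]\ne1$, contradicting the hypothesis that $C_G(b)$ normalizes $P$, since both commutators lie in $P\cap S_i=1$. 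Without substitutes for these two external results (or a repaired induction), your proposal for the main assertion remains a plan rather than a proof.
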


It should be noted that the assumptions of Theorem D are indispensable because the group $G=PSL(2,2^5)$ admits an automorphism of order $a$ of order $5$ such that $C_G(a)=PSL(2,2)$ normalizes an $\langle a\rangle$-invariant $11$-subgroup of $G$ while $F(G)=F_2(G)=1.$ 
	
	Section 4 mainly includes Theorem E below which yields Theorem C in \cite{Y} as an immediate corollary.

	\begin{theorem}\label{E} Let $A$ be a group acting on a group $G$ coprimely. Suppose that $U$ is an $A$-invariant $p$-subgroup of $G$ such that $C_U(a)=1$ for each $1\neq a\in A$ and that $C_G(A)$ normalizes $U$.  Then $U\leq O_p(G)$ if the following hold:
	\begin{enumerate}
	\item [(i)] $G$ is $PSL(2,2^r)$ free for all $1\neq r$ dividing $|A|$ in case where $p \mid 2^r+1$,
	
 \item [(ii)] $G$ is $Sz(2^r)$ free for all $1\neq r$ dividing $ |A|$ in case where $p \mid 4^r+1$.

\end{enumerate}
		\end{theorem}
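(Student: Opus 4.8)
The plan is to argue by contradiction, taking $(G,A,U)$ to be a counterexample with $|G|$ minimal. First I would reinterpret the hypothesis $C_U(a)=1$ for all $1\neq a\in A$: since $U\neq 1$ this forces $A$ to act faithfully on $U$, and $U\rtimes A$ is a Frobenius group with kernel $U$ and complement $A$; in particular $A$ is a Frobenius complement. Two reductions should then be available in the minimal counterexample. Setting $M=\langle U^G\rangle$, the triple $(M,A,U)$ inherits every hypothesis (note $C_M(A)=C_G(A)\cap M$ still normalizes $U$, and being $PSL(2,2^r)$- and $Sz(2^r)$-free passes to subgroups), so minimality forces $G=\langle U^G\rangle$; here I use that $O_p(M)$ is characteristic in the normal subgroup $M$ and hence lies in $O_p(G)$. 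A coprime-action argument on $G/O_{p'}(G)$, using $\overline{C_G(A)}=C_{\overline G}(A)$ and $\overline U\cong U$, reduces the problem to the case $O_{p'}(G)=1$. After these reductions $F^*(G)=O_p(G)E(G)$ with $C_G(F^*(G))\le F^*(G)$.

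The core of the argument is the analysis of how $U$ acts on $F^*(G)$. If $U$ centralized $F^*(G)$ it would lie in $F^*(G)$, and combining $G=\langle U^G\rangle$, $O_{p'}(G)=1$ and the Frobenius action of $A$ should force $U\le O_p(G)$, against assumption; so $U$ must move either $O_p(G)$ or some component. The case $E(G)=1$, where $G$ is $p$-constrained, is comparatively elementary and handled internally using $C_G(O_p(G))\le O_p(G)$ (after passing modulo $O_p(G)$ if necessary), and the decisive case is $E(G)\neq 1$. There $A$ permutes the components $L_1,\dots,L_t$ coprimely and $U$ permutes them as well. The aim of this stage is to pass to an almost simple section: modulo $O_p(G)$ and the centres of the components one isolates a single nonabelian simple group $S$, a section of $A$ acting on it, and the image of $U$ as a nontrivial $A$-invariant $p$-subgroup on which every nontrivial element of the relevant section acts fixed-point-freely and which is normalized by the corresponding fixed points. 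The orbit analysis of the components (using coprimality to fix a component and to control the permutation action) is routine but must be carried out so that the fixed-point-free condition survives.

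The main obstacle is the resulting statement about a single simple group $S$, which is where the classification of finite simple groups enters. Since $A$ acts coprimely on $S$, its image in $\mathrm{Out}(S)$ carries no diagonal contribution (the group of outer diagonal automorphisms has order dividing $|S|$), so by the structure of $\mathrm{Out}(S)$ the nontrivial elements act essentially as field automorphisms, including the graph–field automorphisms producing the Suzuki and Ree groups; writing $r$ for the order of such an automorphism, $r$ divides $|A|$. The key lemma I would prove is: if a field-type automorphism $\sigma$ of order $r$ of a finite nonabelian simple group $S$ admits a nontrivial $\sigma$-invariant $p$-subgroup $U_0$ with $C_{U_0}(\sigma)=1$ normalized by $C_S(\sigma)$, then $S\cong PSL(2,2^r)$ with $p\mid 2^r+1$, or $S\cong Sz(2^r)$ with $p\mid 4^r+1$. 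The mechanism is that $C_S(\sigma)$ is the subfield subgroup $S(q_0)$ with $q=q_0^{\,r}$; for $C_S(\sigma)$ to normalize a nontrivial $p$-subgroup lying in the part of a maximal torus that $\sigma$ moves fixed-point-freely, $C_S(\sigma)$ must be too small to absorb that $p$-part, which across the families of Lie type forces the rank-one characteristic-$2$ groups $PSL(2,q)$ and $Sz(q)$, with $p$ dividing $2^r+1=q+1$ or $4^r+1=q^2+1$ respectively. Pushing this case-by-case verification through every family (and dispatching the alternating and sporadic groups, where coprime field-type automorphisms are far too restricted to produce such a $U_0$) is the substantial part of the work.

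Finally, the lemma contradicts the hypotheses: the isolated simple section $S$ would be a $PSL(2,2^r)$ with $p\mid 2^r+1$ or an $Sz(2^r)$ with $p\mid 4^r+1$ for some $1\neq r$ dividing $|A|$, which is exactly what conditions (i) and (ii) forbid in $G$. Hence no counterexample exists and $U\le O_p(G)$.
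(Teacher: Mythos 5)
Your outline reproduces the paper's overall architecture (minimal counterexample, reduction to a semisimple situation, orbit analysis on components, and a CFSG-based lemma identifying $PSL(2,2^r)$ and $Sz(2^r)$ as the only exceptional targets), but it has a genuine gap: the case in which $U$ acts on the semisimple layer by \emph{outer-diagonal} automorphisms. Your only remark about diagonal automorphisms concerns $A$ --- correct, since $(|A|,|G|)=1$ forces the image of $A$ into the field-automorphism part of $\mathrm{Out}$ --- but $U$ is \emph{not} coprime to the components, and the configuration $G=NU$ with $N$ a product of nonabelian simple groups, $U\cap N=1$, and $U$ embedding into $\mathrm{Outdiag}(N)$ survives every reduction you make (it is compatible with $G=\langle U^G\rangle$ and $O_{p'}(G)=1$). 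Your key lemma cannot see this case: as you state it, $U_0$ sits inside $S$ (you speak of $U_0$ lying in the part of a maximal torus moved fixed-point-freely by $\sigma$), whereas here the image of $U$ in $\mathrm{Aut}(S)$ meets $\mathrm{Inn}(S)$ trivially. This is precisely the hard endgame of the paper's own proof (its Case $N<G$): after showing $U$ is elementary abelian and embeds in $\mathrm{Outdiag}(N)$, the authors conclude $N\cong PSL^{\varepsilon}(n,q)$ with $\mathrm{Outdiag}(N)$ cyclic of order $d=(n,q-\varepsilon 1)$, hence $|U|=p\le d\le n$, and then eliminate the case by arithmetic: any $r\in\pi(A)$ satisfies $r\le|A|\le|\mathrm{Aut}(U)|=p-1<n$ and $r\mid q^{r-1}-(\varepsilon 1)^{r-1}$, which divides $|N|$, contradicting $(|A|,|G|)=1$. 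Note that this case is killed unconditionally --- hypotheses (i) and (ii) play no role in it --- which is exactly why it is easy to overlook; but without it your proof is incomplete.

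Two further points, both repairable. First, your reduction to $O_{p'}(G)=1$ does not follow from minimality alone: applying the theorem to $G/O_{p'}(G)$ yields only $U\le O_{p',p}(G)$, not $U\le O_p(G)$; you must combine it with $G=\langle U^G\rangle$ (which makes $G/O_{p'}(G)$ a $p$-group) and then invoke the $p$-separable case as a base case --- the paper proves this separately (its Theorem 4.1, a short Frobenius fixed-point argument on an invariant Sylow $q$-subgroup of $O_{p'}(G)$), and you cannot fold it into your later $E(G)=1$ discussion without circularity. Second, the key lemma you defer as ``the substantial part of the work'' is where the paper takes a genuinely slicker route than your proposed family-by-family torus analysis: it uses the Burgoyne--Griess--Lyons theorem to trap $C_G(a)K$ between $O^{u'}(C_G(a^r))$ and $C_G(a^r)^{*}$ and inducts on the order of the field automorphism, reducing all of Lie type at once to the three small centralizers $PSL(2,2)$, $PSL(2,3)$, $Sz(2)$, which are then handled by explicit subgroup-structure results (King's list of maximal subgroups of $PSL(2,q)$ and Suzuki's classification of subgroups of $Sz(q)$). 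Your mechanism also tacitly assumes $U_0$ is toral, which in the paper is a derived conclusion (cyclicity of the relevant Sylow or normalizer), not a hypothesis.
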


	The notation and terminology are standard.	
	
	\section{bounding the $p$-length}
	
	Let $G$ and $A$ be groups where $A$ acts on $G$. The concept of an $A$-tower will be frequently used throughout the paper.
	
	\begin{definition} (Definition $1.1$ and $1.2$ of \cite{T}) We say that a sequence $(S_i),i= 1,\ldots ,t$ of $A$-invariant subgroups of $G$ is an $A$-tower of $G$ of height $t$ if the following are satisfied:
		\begin{itemize}
			\item[(1)] $S_i$ is a $p_i$-group, $p_i$ is a prime, for $i= 1,\ldots ,t$;
			\item[(2)] $S_i$ normalizes $S_j$ for $i\leq j$;
			\item[(3)] Set $P_t=S_t,\,  P_i=S_i/T_i$ where $T_i=C_{S_i}(P_{i+1}), \, i= 1,\ldots ,t-1$ and we assume that $P_i$ is not trivial for $i= 1,\ldots ,t$;
			\item[(4)] $p_i\ne p_{i+1}, \, i= 1,\ldots ,t-1$.
		\end{itemize}
		
		\vspace{1mm}
		
		An $A$-tower $(S_i),\, i=1,\ldots ,t$ of $G$ is said to be irreducible if the following are satisfied:
		\begin{itemize}
			\item[(5)]  $\Phi(\Phi(P_i)) = 1,\,  \Phi(P_i)\leq Z(P_i)$ and, if $p_i\ne 2$, then $P_i$ has exponent $p_i$ for $i= 1,\ldots ,t.$  Moreover $P_{i-1}$ centralizes $\Phi(P_i)$;
			\item[(6)] $P_1$ is elementary abelian;
			\item[(7)] There exists $H_i$, an elementary abelian $A$-invariant subgroup of $P_{i-1}$ such that $[H_i,P_i] = P_i$;
			\item[(8)] $(\prod_{j=1}^{i-1}S_j)A$ acts irreducibly on $P_i/\Phi(P_i)$.
		\end{itemize}
	\end{definition}

	\begin{remark} When the action is coprime $G$ and $G$ is solvable, $G$ contains an $A$-invariant Sylow $p$-subgroup for every prime $p$ dividing $|G|$. This leads to the existence of $A$-towers so that the Fitting height of the group $G$ coincides with the maximum of the heights of all possible $A$-towers in $G$.
	\end{remark}

	The essence of the proof of Theorem A lies in the following
	
	\begin{lemman}\label{lem} Let a group $A$ act on the solvable group $G$ coprimely and let $p$ be a prime dividing $|G|.$ Then $G$ contains an $A$-tower having exactly $\ell_p(G)$-many $p$-terms.
	\end{lemman}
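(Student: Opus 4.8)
The plan is to induct on $|G|$, transporting $A$-towers between $G$ and its $A$-invariant quotients and matching the $p$-terms of a tower to the $p$-factors of the upper $p$-series. Throughout I would lean on the standard coprime-action tools (existence and conjugacy of $A$-invariant Sylow subgroups, $A$-invariance of preimages) and on the fact that in a solvable group $F(G)$ is self-centralizing, i.e.\ $C_G(F(G))\le F(G)$. It is cleanest to prove the slightly stronger assertion that the tower can be chosen with its bottom term $S_t$ lying in $F(G)$; this control on the extreme term is exactly what makes the recursion close. As a preliminary reduction, put $K=O_{p'}(G)$ and pass to $\overline{G}=G/K$: one has $\ell_p(\overline{G})=\ell_p(G)$, and an $A$-tower of $\overline{G}$ pulls back to an $A$-tower of $G$ with the same number of $p$-terms by selecting $A$-invariant Sylow subgroups of the successive preimages compatibly. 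Hence I may assume $O_{p'}(G)=1$, and the induction then runs through two complementary cases that alternate, according to whether $O_{p'}(G)$ or $O_p(G)$ is trivial.

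Suppose first $O_{p'}(G)=1$, so that $F(G)=O_p(G)=:V$ is a nontrivial $p$-group with $C_G(V)\le V$. In $\overline{G}=G/V$ one checks $O_p(\overline{G})=1$ (a normal $p$-subgroup of $\overline{G}$ pulls back into $O_p(G)=V$), whence $F(\overline{G})=O_{p'}(\overline{G})$ is a $p'$-group, and $\ell_p(\overline{G})=\ell_p(G)-1$ since quotienting by $V$ strips off the bottom $p$-factor. By induction $\overline{G}$ carries an $A$-tower with $\ell_p(G)-1$ $p$-terms whose bottom term lies in $F(\overline{G})$, hence is a nontrivial $p'$-group; pulling it back to $G$ and appending $S_{t}:=V=O_p(G)$ as a new bottom term gives the desired tower. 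Condition (2) is immediate because $V\trianglelefteq G$; condition (4) at the junction holds because the term above $V$ is a $p'$-group; and condition (3) holds because that term $Q$ is a nontrivial $p'$-group normalizing $V$, so $C_Q(V)\le Q\cap C_G(V)\le Q\cap V=1$ and $Q$ acts nontrivially on $V$. The new bottom term $V=F(G)$ lies in $F(G)$, reproducing the strengthened statement. In the symmetric case $O_p(G)=1$ one has $F(G)=O_{p'}(G)=:K$, and passing to $G/K$ (which satisfies $O_{p'}=1$) the inductive tower has a \emph{$p$-group} bottom term $S_{t-1}'$; here one prepends a new $p'$-bottom term chosen as an $A$-invariant Sylow $q$-subgroup $Q\le K$ of $F(G)$. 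The engine is again self-centrality: $S_{t-1}'$ cannot centralize $F(G)=K$ (else $S_{t-1}'\le C_G(F(G))\le F(G)=K$, impossible for a nontrivial $p$-group), so $Q$ may be chosen not centralized by $S_{t-1}'$, giving condition (3), while $p_t=q\neq p$ gives condition (4).

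Because the $O_{p'}$-quotient step changes $\ell_p$ by $0$ and the $O_p$-quotient step changes it by exactly $1$, and because the construction restores precisely one term of the appropriate prime at each stage, the final count of $p$-terms is \emph{exactly} $\ell_p(G)$ rather than merely a bound. The base case $\ell_p(G)=1$ with $O_{p'}(G)=1$ is the single-term tower $(V)$ with $V=O_p(G)=F(G)$.

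The main obstacle I expect is the coprime-action bookkeeping behind the pull-backs: transporting a tower along $G\to G/K$ and along $G\to G/V$ while preserving every tower axiom, the exact number of $p$-terms, and the placement of the bottom term in the Fitting subgroup. Concretely, when prepending the $p'$-Sylow $Q\le K=O_{p'}(G)$ in the case $O_p(G)=1$, the normalizing condition (2) requires that every term of the pulled-back tower normalize $Q$, which forces one to select $Q$ as part of an $A$-invariant Sylow (or Hall) system adapted to the whole tower; and one must likewise verify that the non-triviality of each induced action in condition (3) survives lifting. This is precisely the type of reduction carried out in the proof of Theorem~2.1 of \cite{Tu}, and threading the strengthened hypothesis on the bottom term through the two alternating cases is the delicate point; once that stability is secured, the counting of $p$-terms is automatic.
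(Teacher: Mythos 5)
Your skeleton runs on the same engine as the paper's proof (strip lower layers of the upper $p$-series, lift towers through quotients in the style of Lemma 1.6 of \cite{T}, and manufacture the nontrivial action of the appended bottom term from $C_G(F(G))\leq F(G)$), and your case $O_{p'}(G)=1$ is sound: there $C_{S_t}(V)\leq S_t\cap C_G(V)\leq S_t\cap V=1$ for the old bottom term $S_t$ (a $p'$-group) and $V=O_p(G)=F(G)$, so appending $V$ leaves every $P_i$ literally unchanged. The genuine gap is in your case $O_p(G)=1$. The quotients $P_i=S_i/T_i$ are defined \emph{recursively from the bottom}, $T_i=C_{S_i}(P_{i+1})$, so appending a new bottom term $Q$ redefines all of them: the old bottom quotient shrinks from $P_{t-1}=S_{t-1}$ to $S_{t-1}/C_{S_{t-1}}(Q)$, hence $T_{t-2}=C_{S_{t-2}}(P_{t-1})$ can grow, possibly to all of $S_{t-2}$, and this can cascade upward and kill condition (3) at a higher index. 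Your verification (``$Q$ may be chosen not centralized by $S'_{t-1}$, giving condition (3)'') checks only the adjacent pair. The repair requires the iterated-commutator mechanism that the paper builds in explicitly: a tower satisfies $X:=[S_1,\ldots,S_{t-1}]\neq 1$, and since $X$ is a nontrivial $p$-group while $F(G)$ is here a $p'$-group, $X\leq C_G(F(G))\leq F(G)$ is impossible; so choose $q\neq p$ with $[X,O_q(G)]\neq 1$, i.e.\ $[S_1,\ldots,S_{t-1},Q]\neq 1$ for $Q=O_q(G)$, and then an easy upward induction (if the image of $[S_1,\ldots,S_i]$ in $P_i$ were trivial, the longer commutator would vanish) shows all the new $P_i$ are nontrivial. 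Your case 1 masks this problem precisely because there $P_t$ does not change, which may be why you missed it.

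For comparison, the paper avoids both your strengthened statement and the cascade issue by a different decomposition: after reducing a minimal counterexample to $O_{p'}(G)=1$, it quotients by $O_{p,p'}(G)$ in a single stroke, lifts the resulting tower with $\ell_p(G)-1$ $p$-terms by \cite[Lemma 1.6]{T}, and appends \emph{two} terms: an $A$-invariant $q$-subgroup $Q\leq O_{p,p'}(G)$ chosen so that the full commutator $[S_1,\ldots,S_t,Q]\neq 1$ --- possible because $S_t/(S_t\cap O_p(G))$ acts faithfully on $O_{p,p'}(G)/O_p(G)$ --- followed by $F(G)=O_p(G)$, whose junction is safe for the same reason as your case 1. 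Two smaller slips in your write-up: the identification $F(G)=O_{p'}(G)$ when $O_p(G)=1$ is false in general (take $G=C_7\rtimes C_6$ and $p=3$: then $O_{3'}(G)$ is dihedral of order $14$ while $F(G)=C_7$); your argument survives because you only need $F(G)\leq O_{p'}(G)$ and because the $A$-invariant Sylow $q$-subgroup of the nilpotent group $F(G)$ is just $O_q(G)$, normal in $G$ --- which also dissolves the Sylow-system worry you raise about condition (2) --- but the self-centralizing input must consistently be $F(G)$, never $O_{p'}(G)$. Finally, your alternating recursion must be stated so that the strengthened claim is only asserted for groups with $O_p(G)=1$ or $O_{p'}(G)=1$ (the classes do exchange under your two quotient steps, so this is coherent), and it must accommodate the degenerate endpoint where $p\nmid|\overline{G}|$ and the inductive tower has zero $p$-terms.
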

	\begin{proof}

		We proceed by induction on $|G|.$ Suppose that $O_{p'}(G)\ne 1$. Then the group $\bar{G}=G/O_{p'}(G)$ contains an $A$-tower $\bar{S}_1,\ldots ,\bar{S}_t$ having exactly $\ell_p(G)$-many $p$-terms where $\bar{S}_i$ is a subgroup of $\bar{G}$ for each $i=1,\ldots ,t.$ By Lemma 1.6 in \cite{T} there is an $A$-tower $S_1,\ldots ,S_t$ of $G$ which maps to $\bar{S}_i,\; i=1,\ldots ,t$ having exactly $\ell_p(G)$-many $p$-terms. This contradiction shows that $O_{p'}(G)=1$ and hence $F(G)=O_p(G)$.
		
		Similarly an induction argument applied to the action of $\bar{G}=G/O_{p,p'}(G)$ yields an $A$-tower $\bar{S}_1,\ldots ,\bar{S}_t$ of $\bar{G}$ having exactly $(\ell_p(G)-1)$-many $p$-terms. Notice that $\bar{S}_t$ is a subgroup of $O_p(\bar{G})$. By Lemma 1.6 in \cite{T} again, we get an $A$-tower $S_1,\ldots ,S_t$, of $G$ which maps to $\bar{S}_i,\; i=1,\ldots ,t$.  We see that $[S_1,\ldots , S_{t-1}, S_t]\ne 1$
		where $S_t$ is a $p$-group contained in $O_{p,p',p}(G)$. Since $S_t/S_t\cap O_p(G)$ acts faithfully on $O_{p,p'}(G)/O_p(G)$, there exists a $q$-group $Q\leq O_{p,p'}(G)$ for some prime $q\ne p$ such that $[S_1,\ldots , S_{t-1}, S_t, Q]\ne 1.$ We may assume that $Q$ is $A$-invariant. It follows that the sequence $$S_1,\ldots , S_{t-1}, S_t, Q, F(G)$$ forms an $A$-tower having exactly $\ell_p(G)$-many $p$-terms which is a contradiction completing the proof.
	\end{proof}
	
	\begin{proof}[Proof of Theorem \ref{A}] We choose a counterexample with minimum $|GA|+|A:B|$. If $A = B$ then $G=C_G(A)$ whence $G=O_{p'}(G)$ and the result holds. Hence we may assume that $\ell(A:B)\geq 1$. We may also assume that $O_{p'}(G)=1$, that is $F(G)=O_p(G).$
	
	Lemma \ref{lem} guarantees the existence of an $A$-tower  $S_1,\ldots ,S_t$ in $G$ having exactly $\ell_p(G)$-many  $p$-terms where $S_1$ and $S_t$ are both $p$-groups with $t\geq 3.$ We may assume that this tower is irreducible by Lemma 1.4 in \cite{T}.  An induction argument gives that $G=\prod_{i=1}^{t}S_i$ with $T_{t-1}=1.$ Set $H=\prod_{i=1}^{t-1}S_i$ and $  R=P_{t-1}=S_{t-1}$. From now on we shall proceed over a series of steps:\\
	
	\textit{Step 1. For all $C\leq A $ such that $B\leq C$ and $\ell(C:B)\geq 1$ we have $R=[R,C]^{H}$.}\\
	
	Assume the contrary, that there exists $C\leq A $ such that $B\leq C$ and $\ell(C:B)\geq 1$ so that $R\ne [R,C]^{H}$. Set $R_0=[R,C]^{H}\Phi(R)$. Since $R/\Phi(R)$ is irreducible as an $HA$-module, we have $\bigcap_{a\in A}{R_0}^a=\Phi(R)$ and $\bigcap_{a\in A}{C_G(R/R_0)}^a=C_G(R/\Phi(R))$.
	
	Set $\overline{H}= H/C_H(R/\Phi(R))$. As $[\overline{S}_1,\ldots ,\overline{S}_{t-2}]=\overline{S}_{t-2}\ne  1$, the sequence  $\overline{S}_1,\ldots ,\overline{S}_{t-2}$ is an $A$-tower of $\overline{H}$ having exactly $(\ell_p(G)-1)$-many  $p$-terms. This forces that $$\ell_p(\overline{H})=\ell_p(G)-1.$$ Notice that $[R/R_0,C]=1$ and $\overline{R_0}\lhd \overline{H}$. Then, by the three subgroups lemma, $[H,C]\leq C_H(R/R_0)$ and hence $$\bigcap_{a\in A}[\overline{H},C]^a=\overline{\bigcap_{a\in A}[H,C]^a}\leq \overline{\bigcap_{a\in A}{C_H(R/R_0)}^a}=1.$$ Now an induction argument applied to the action of $A$ on $\overline{H}$ implies $$\ell_p(G)-1=\ell_p(\overline{H})\leq \ell(A:C)\leq \ell(A:B)-1.$$ This forces that  $\ell_p(G)\leq \ell(A:B)$, which is a contradiction establishing the claim.\\

	\textit{Step 2. Recall that $P_{t-2}=S_{t-2}/T_{t-2}$. Set $Q=S_{t-2}$ and $K=\prod_{i=1}^{t-2}S_i$. Then for all $D\leq A $ such that $B\leq D$ and $\ell(D:B)\geq 2$ we have $Q=[Q,D]^{K}\Phi$ where $\Phi=\Phi(Q)T_{t-2}$.}\\
	
	Assume the contrary, that there exists $D\leq A $ such that $B\leq D$ and $\ell(D:B)\geq 2$ so that $Q\ne [Q,D]^{K}\Phi$. Let $Q_0=[Q,D]^{K}\Phi$. Since $Q/\Phi$ is an irreducible $KA$-module, $\bigcap_{a\in A}{Q_0}^a=\Phi$ and $\bigcap_{a\in A}(C_G(Q/Q_0)^a=C_G(Q/\Phi)$.
	
	Set $\overline{K}= K/C_K(Q/\Phi)$. As $[\overline{S}_1,\ldots ,\overline{S}_{t-3}]=\overline{S}_{t-3}\ne  1$, the sequence  $\overline{S}_1,\ldots ,\overline{S}_{t-3}$ forms an $A$-tower of $\overline{K}$. It follows that
	$$\ell_p(G)-2\leq \ell_p(\overline{K})\leq \ell_p(G)-1.$$ Then we also have $\ell_p(K/C_K(Q/Q_0))\geq \ell_p(G)-2.$ Since $D$ acts
	trivially on $Q/Q_0$, $[K,D]\leq C_K(Q/Q_0)$. Therefore we have $$\bigcap_{a\in A}[\overline{K},D]^a=\overline{\bigcap_{a\in A}[K,D]^a}\leq \overline{\bigcap_{a\in A}{C_K(Q/Q_0)}^a}=1.$$ By induction applied to the action of $A$ on $\overline{K}$ we get $$\ell_p(G)-2\leq \ell_p(\overline{K})\leq \ell(A:D)\leq \ell(A:B)-2$$ which forces that  $\ell_p(G)\leq \ell(A:B)$. This contradiction establishes the claim.\\
	
	\textit{Step 3. Final contradiction.}\\
	
	Since the $A$-tower $S_1,\ldots ,S_t$ is irreducible, the groups $P$ and $Q/C_Q(P)$ are special. Furthermore they are of exponent $p_{t-1}$(resp. $p_{t-2}$) if $p_{t-1}$ and $p_{t-2}$ are odd. We are now ready to apply \cite[Theorem 1.1]{Tu},  to the action of $S_{t-1}S_{t-2}A$ on the Frattini factor group of $S_t$ and get $C_{S_t}(A)\ne 1.$ This contradiction completes the proof.
	\end{proof}

\begin{proof}[Proof of Theorem \ref{B}] Let $GA$ be a minimal counterexample to the theorem. We may assume that $O_{p'}(G)=1$. Let $O_i(G),\; i=1,\ldots ,\ell(G)$ be defined for $\pi=\{p\}$ as in \cite{Ku} where $\ell(G)$ is the least positive integer such that $G=O_{\ell(G)}(G).$ If $P\leq O_{\ell(G)-1}(G),$ an induction argument applied to the action of $A$ on the group $O_{\ell(G)-1}(G)$ implies that $\ell_p(G)\leq \ell(A)$, which is not the case. Therefore we may assume that $G=O_{\ell(G)-1}(G)P.$ By \cite[Lemma 4.3]{Ku} there exists a sequence $A_1,\ldots , A_{\ell(G)}$ of $A$-invariant sections of $G$ satisfying the conditions $(1.10.a)-(1.10.f)$ of \cite{Ku}. Furthermore, as a consequence of \cite[Lemma 4.3 (a)]{Ku}, the following are satisfied:\\
	
	$(a)$ $A_i$ is a $p$-group (or a $p'$-group), respectively $A_{i+1}$ is a $p'$-group (or a $p$-group), and $A_{\ell(G)}\leq G$. In our case we see that $A_1$ and $A_{\ell(G)}$ are both $p$-groups. In particular $C_{A_1}(A)=1$\\
	
	$(b)$ $\ell_p(G)$ is equal to the number of $p$-groups among the sections $A_i$, for  $i=1,\ldots ,\ell(G).$\\

	$(c)$ $ [A_i, A_{i-1}] = A_i$, for $i = 2 , \ldots ,\ell(G)$.\\
	
	More precisely, the sequence $A_1,\ldots , A_{\ell(G)}$ is an $A$-tower. Since $A$ acts fixed point freely on $A_1$ there is a nonidentity element $a\in A$ of prime order such that $[A_1,a]\ne 1$. It follows by Theorem 3.1 in \cite{T} that there is a sequence of $A$-invariant subgroups $C_2,\ldots ,C_{\ell(G)}$ each of which is centralized by $a$ so that it forms an $A$-tower. This forces that the $C_{O_{\ell(G)}(G)}(a)$ has $p$-length $\ell_p(G)-1$. We then apply induction to the action of $A/\langle a\rangle $ on $C_{O_{\ell(G)}(G)}(a)$ and get $\ell_p(G)-1\leq \ell-1.$ This contradiction completes the proof.
	\end{proof}

	\section{embedding of the unique $A$-invariant Sylow $p$-subgroup }

	\begin{proof}[Proof of Theorem \ref{C}]  We proceed by induction on $|GA|.$ Let $k$ be the largest such that $p$ divides the order of $F_k(G)/F_{k-1}(G)$. Assume that $k>2\ell(A).$ Then there is an $A$-tower $S_1, S_2,\ldots , S_{2\ell(A)+1} $ of $G$ where $S_1$ is a $p$-group. We may assume that this tower is irreducible. Set $V=P_2/\Phi(P_2).$ We have $C_V(A)=1$ as $[C_{P_2}(A),P_1]=1$. If the group $P_1A$ is Frobenius we would have $C_V(A)\ne 1$, which is a contradiction. Thus there exists $1\ne a\in A$ such that $C_{P_1}(a)\ne 1$. By \cite[Theorem 3.1]{T}, we see that $$[C_{S_1}(a),\ldots, C_{S_{j-1}}(a),  C_{S_{j+1}}(a), \ldots ,  C_{S_{2\ell(A)+1}}(a)]\ne 1.$$ Indeed we have one of the two cases:\\
	
	(1) The group $X=C_{S_1}(a) \ldots C_{S_{j-1}}(a)C_{S_{j+1}}(a) \ldots  C_{S_{2\ell(A)+1}}(a)$  is of Fitting height $2\ell$;\\
	
	(2) The group $Y=C_{S_1}(a) \ldots C_{S_{j-1}}(a)C_{S_{j+2}}(a) \ldots  C_{S_{2\ell(A)+1}}(a)$  is of Fitting height $2\ell-1$.\\
	
	In case (1) we apply induction to the action of $A/\langle a\rangle$ on $X$ and get $C_{S_1}(a)\leq F_{2\ell-2}(X)$, and in case (2) we apply induction to the action of $A/\langle a\rangle$ on $Y$ and get $C_{S_1}(a)\leq F_{2\ell-2}(Y)$, which are both impossible. Hence the claim is established.
	
\end{proof}

	\begin{proof}[Proof of Theorem \ref{D}] Suppose that $k$ is the largest such that $p$ divides the order of $F_k(G)/F_{k-1}(G)$. Assume that $k>2.$ Then there is an $A$-tower $S_1, S_2, S_3$ of $G$ where $S_1$ is a $p$-group. We may assume that this tower is irreducible. Then $C_{P_2} (a) \leq \Phi(P_2)$
	for all $1 \ne a \in A$
	as $[C_{P_2} (a), P_1] = 1$. It follows that $P_2 = [P_2, a]$ for all $1 \ne a \in A$, that is, the group
	$P_2A$ is Frobenius-like with kernel $P_2$. By \cite[Corollary C]{GE}, applied
	to the action of $P_2A$ on $P_3$ we observe that $C_{P_3} (A)\ne  1$ which forces that $p_3\ne p$.
	
	If the group $P_1A$ were Frobenius then we would have $C_{P_2} (A)\not \leq \Phi(P_2)$ which is not the case.
	Thus there exists $1 \ne b \in A$ such that $C_{P_1} (b) \ne 1$. We may assume that $b$ is of prime
	order. It follows that $[C_{S_1}
	(b), P_2, P_3]\ne 1$ as $P_1$ acts faithfully on $P_2.$ Since $p$ is odd,
	by \cite[Theorem 3.1]{T},  we get either $[C_{S_1}
	(b), C_{S_2}
	(b)] \ne 1$ or $[C_{S_1}
	(b), C_{S_3}
	(b)]\ne 1.$
	Both are impossible by hypothesis and the first claim follows.
	
	Finally assume that $A$ is of prime order and $G$
	is solvable. In this case let $S_i$ and $S_{i-1}$ be two successive terms of an $A$-tower such that $S_{i-1}$ is a $p$-group. Notice that $C_{S_{i-1}}(A)$ and $C_{P_i/\Phi(P_i)}(A)$ are both trivial. This forces by Thompson's celebrated theorem that $S_{i-1}$ centralizes $P_i$ which is a contradiction. Hence the proof is complete.
	\end{proof}
	
	\begin{example}

		Let $H=P\times A$ where $P\lhd H$ is cyclic of order $7$ and $A$ is cyclic of order $9.$ Suppose that $B=\Omega_1(A)\lhd H$ and $H/B$ is a Frobenius group of order $21.$ Then $Soc(H)=P\times B$ and is cyclic of order $21$.  By Theorem 10.3 on page 173 in \cite{Do} there exists an elementary abelian $5$-group $V$ which is a faithful and irreducible $H$
		-module. Consider the semidirect product $VH$ and let $G$ be the subgroup $VP$. Then $A$ acts coprimely on $G$, $[V,P]=V$, and $[V,B]=V$ whence the group $VA$ is Frobenius. Now $C_G(A)=1$ and $C_G(a)=P$ for any $1\ne a\in A$ of prime order. This example shows that $P$ is not necessarily contained in $F(G)$ under the hypothesis of Theorem \ref{D}.

	\end{example}

	\section{embedding of some $A$-invariant $p$-subgroups within the group}
	
	Although this section is devoted to a proof of Theorem E we want first to emphasize a special case of this result due to the simplicity of its proof.
	
	\begin{theoremn} \label{separab} Let $A$ be a group acting on the $p$-separable group $G$ coprimely and let $U$ be an $A$-invariant $p$-subgroup of $G$ such that $C_U(a) = 1$ for each $1 \ne a\in A$. If
		$C_G(A)$ normalizes $U$, then $U\leq O_p(G)$.
		
	\end{theoremn}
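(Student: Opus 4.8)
The plan is to argue by induction on $|GA|$, so we may assume $U\neq 1$; then $A\neq 1$ and $C_U(A)\le C_U(a)=1$ for any $1\neq a\in A$, whence $C_U(A)=1$ and $U=[U,A]$. First I would record the reductions coming from coprime action. Any $A$-invariant $p$-subgroup lies in an $A$-invariant Sylow $p$-subgroup, and these form a single $C_G(A)$-conjugacy class; since $C_G(A)$ normalizes $U$, for an $A$-invariant Sylow $P\supseteq U$ and $c\in C_G(A)$ we get $U=U^c\le P^c$, so $U$ lies in \emph{every} $A$-invariant Sylow $p$-subgroup. Next, the normal closure $\langle U^G\rangle$ is $A$-invariant and $C_{\langle U^G\rangle}(A)\le C_G(A)$ still normalizes $U$; if it is proper, induction gives $U\le O_p(\langle U^G\rangle)\le O_p(G)$ and we are done, so we may assume $G=\langle U^G\rangle$.

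The heart of the argument is the case $V:=O_{p'}(G)\neq 1$, which I would settle by a Frobenius fixed-point computation. Here $UA$ is Frobenius with kernel $U$ acting coprimely on $V$, and from the hypothesis one gets $C_V(A)\le C_V(U)$: for $x\in C_V(A)$ we have $[x,U]\le U$ and $[x,U]\le V$, so $[x,U]\le U\cap V=1$. If $[U,V]\neq 1$, choose a $UA$-chief factor $M$ of $[U,V]$ on which $U$ acts nontrivially (possible since $[[U,V],U]=[U,V]$). As $U\lhd UA$ and $M$ is $UA$-irreducible, $C_M(U)=1$. But a Frobenius group whose kernel has no nontrivial fixed points has a nontrivial fixed point for its complement---on an elementary abelian $M$ because $M$ is then free over the complement, and on a nonabelian $M$ because an $A$-fixed-point-free action would force $M$ nilpotent by Thompson's theorem---so $C_M(A)\neq 1$. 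This contradicts $C_M(A)\le C_M(U)=1$, which is inherited from $C_V(A)\le C_V(U)$ by coprimeness. Hence $[U,V]=1$. Passing to $\overline G=G/V$, where $\overline U\cong U$ preserves all hypotheses and $O_{p'}(\overline G)=1$, induction yields $U\le O_{p',p}(G)=:R$; since $[U,V]=1$ we get $U\le C_R(V)$, and as $U$ is a $p$-group, $U\le O_p(R)\le O_p(G)$. Thus a minimal counterexample has $O_{p'}(G)=1$.

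It remains to treat $O_{p'}(G)=1$, so that $F^*(G)=O_p(G)$ and $C_G(O_p(G))\le O_p(G)$; this is the step I expect to be the main obstacle. If $L:=O_{p'}(G/O_p(G))=1$ then $G=O_p(G)\supseteq U$ and we are done, so assume $L\neq 1$ and lift it to an $A$-invariant Hall $p'$-subgroup $L_0$ of $O_{p,p'}(G)$. The natural temptation---to factor out $O_p(G)$ and work in $G/O_p(G)$, where $UO_p(G)/O_p(G)$ acts nontrivially and coprimely on the genuine $p'$-group $L$---fails, because the fixed-point-free condition $C_U(a)=1$ is \emph{not} inherited by the image $UO_p(G)/O_p(G)$: fixed points can be created modulo a $p$-group, so the chief-factor computation of the previous paragraph cannot be run on $L$. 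The crux is therefore to relocate the action onto an honest coprime $p'$-section on which $U$ itself (and not merely its image) acts, so that the Frobenius fixed-point argument again contradicts $C_U(a)=1$. It is precisely here that $p$-separability enters: it rules out the simple sections (of $PSL(2,2^r)$ or $Sz(2^r)$ type) that obstruct this transfer, exactly as in Theorem~\ref{E}. Carrying this out by a Frattini/normalizer argument that produces an $A$-invariant $L_0$ normalized by the relevant $p$-layer of $U$, and then applying the Frobenius-type fixed-point results already used for Theorems~\ref{C} and~\ref{D}, yields the required contradiction and completes the proof.
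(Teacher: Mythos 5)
Your proof has a genuine gap, and ironically it sits exactly where you announce ``the main obstacle'': the obstacle does not exist. You claim that the condition $C_U(a)=1$ is not inherited by the image $\overline U=UO_p(G)/O_p(G)$ because ``fixed points can be created modulo a $p$-group.'' That is the non-coprime phenomenon; here $(|U|,|A|)=1$, so the standard coprime-action lemma applied to the action of $A$ on $U$ with the $A$-invariant normal subgroup $U\cap O_p(G)$ gives $C_{U/(U\cap O_p(G))}(a)=C_U(a)(U\cap O_p(G))/(U\cap O_p(G))=1$ for every $1\ne a\in A$, and identifying $\overline U\cong U/(U\cap O_p(G))$ as $A$-groups, the fixed-point-freeness passes to $\overline U$; likewise $C_{\overline G}(A)=\overline{C_G(A)}$ normalizes $\overline U$. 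So in a minimal counterexample you may simply induct on $G/O_p(G)$ and conclude $O_p(G)=1$ --- which is precisely the paper's first reduction --- and then, combined with your (intended) first case forcing $O_{p'}(G)=1$, a nontrivial $p$-separable $G$ is impossible. Instead of this one-line step, your last paragraph offers no argument at all: the invocation of $PSL(2,2^r)$ and $Sz(2^r)$ sections and of an unspecified ``Frattini/normalizer argument'' is a promissory note, not a proof, and it is premised on the false claim above. Note also that you implicitly use the correct coprime lemma yourself when passing to $G/O_{p'}(G)$ earlier.

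There is a second, independent flaw in your treatment of the case $O_{p'}(G)\ne 1$. Since $p$-separability does \emph{not} make $p'$-sections solvable, your $UA$-chief factor $M$ of $[U,V]$ may be a direct product of nonabelian simple groups, and your disposal of that case via ``Thompson's theorem'' is invalid: Thompson's theorem requires a fixed-point-free automorphism of \emph{prime order}, whereas $A$ is arbitrary and you know only $C_M(A)=1$ (assumed toward a contradiction), not that any single element of $A$ acts without fixed points on $M$. The paper avoids this entirely: after reducing to $O_p(G)=1$ and then (by induction on $O_{p'}(G)U$) to $G=O_{p'}(G)U$, it chooses a $UA$-invariant Sylow $q$-subgroup $Q$ of $O_{p'}(G)$ on which $U$ acts nontrivially and applies the well-known Frobenius fixed-point theorem only to the Frattini factor group $V$ of $Q$, which is elementary abelian, so only the module form of the lemma ($C_V(U)=1\Rightarrow C_V(A)\ne 1$) is needed; the contradiction then comes from $[C_Q(A),U]\le Q\cap U=1$ forcing $C_Q(A)\le\Phi(Q)$, i.e.\ $C_V(A)=1$, just as in your elementary abelian subcase. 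In short: replace your chief-factor computation by this Sylow--Frattini argument, and replace your final paragraph by the coprime quotient reduction, and you recover the paper's proof; as written, however, both the nonabelian chief-factor step and the concluding case are gaps.
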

	
	\begin{proof} Let $G$ be a minimal counterexample to the theorem. We can easily observe
		that by an induction argument applied to the action of $A$ on $G/O_p(G)$ we get
		$O_p(G) = 1$. Another induction argument applied to the action of $A$ on $O_{p'} (G)U$
		yields that $G = O_{p'} (G)U$. By hypothesis, the group $UA$ is Frobenius with kernel $U$. Let $Q$ be
		a $UA$-invariant Sylow $q$-subgroup of $O_{p'} (G)$ on which $U$ is nontrivial. Set $V$ be
		the Frattini factor group of $Q$. W.l.o.g. we may assume that $V$ is irreducible as a
		$UA$-module. It is well known that $C_V (A)\ne 1$. On the other hand $[C_Q(A),U] = 1$
		and so $C_Q(A) \leq \Phi(Q)$, that is $C_V (A) = 1$, which is a contradiction.
	\end{proof}

	We now prove some lemmas which will be used in the proof of Theorem E.
	
	\begin{lemman}\label{centralizer}
		Let $G=PGL(2,p^r)$ for some positive integer $r$ and let $P$ be a Sylow $p$-subgroup of $G$. Then $C_G(x)\leq P$ for any nonidentity $x\in P$.
	\end{lemman}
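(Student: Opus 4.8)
The plan is to reduce the problem to ordinary linear algebra in $GL(2,q)$, where $q=p^r$, and then descend to $G=PGL(2,q)$. Write $GL=GL(2,q)$, let $Z$ be its center (the scalar matrices), so that $G=GL/Z$, and choose coordinates so that $P$ is the image of the group of upper unitriangular matrices $\left(\begin{smallmatrix} 1 & t \\ 0 & 1 \end{smallmatrix}\right)$, $t\in\mathbb{F}_q$. Fixing a nonidentity $x\in P$, I would lift it to $u=\left(\begin{smallmatrix} 1 & t \\ 0 & 1 \end{smallmatrix}\right)$ with $t\neq 0$ and analyze which $\bar g\in G$ centralize $x$.

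First I would observe that $\bar g\in C_G(x)$ means $gug^{-1}=\lambda u$ for some scalar $\lambda\in\mathbb{F}_q^{\times}$. Here is the one genuinely non-formal point: since $gug^{-1}$ is conjugate to $u$ it has $1$ as its only eigenvalue, whereas $\lambda u$ has $\lambda$ as its only eigenvalue, so comparing characteristic polynomials forces $\lambda=1$ and hence $g\in C_{GL}(u)$. This is precisely the step that rules out the centralizer in the projective group being strictly larger than the image of the centralizer upstairs, and it is where I expect to have to be careful; everything else is bookkeeping.

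The remaining steps are routine. A direct computation shows that $\left(\begin{smallmatrix} a & b \\ c & d \end{smallmatrix}\right)$ commutes with $u$ exactly when $c=0$ and $a=d$ (using $t\neq 0$), so that $C_{GL}(u)=\left\{\left(\begin{smallmatrix} a & b \\ 0 & a \end{smallmatrix}\right): a\in\mathbb{F}_q^{\times},\ b\in\mathbb{F}_q\right\}$. Since $Z$ is contained in this group, its image in $G$ is obtained by normalizing $a=1$, which is exactly $P$. Combined with the previous paragraph this gives $C_G(x)=P$, and in particular $C_G(x)\leq P$, as claimed.

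As an alternative that avoids the explicit lift, one could argue by counting: $P$ is abelian, so $P\leq C_G(x)$, while the $q+1$ Sylow $p$-subgroups of $G$ form a trivial-intersection family whose nonidentity elements constitute a single conjugacy class of size $q^2-1$; then $|C_G(x)|=|G|/(q^2-1)=q=|P|$ forces $C_G(x)=P$. I would keep the matrix version as the main line, since it is self-contained and does not presuppose the unipotent class structure of $PGL(2,q)$.
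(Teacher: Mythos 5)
Your proof is correct and takes essentially the same route as the paper's: lift to $GL(2,q)$, compute by hand that the centralizer of a nonidentity unipotent upper-triangular matrix is $\left\{\left(\begin{smallmatrix} a & b \\ 0 & a \end{smallmatrix}\right)\right\} = Z(GL(2,q))Q$, and descend to $PGL(2,q)$. The only difference is how the descent is justified: the paper invokes a standard coprime-action lemma (Isaacs, Lemma 7.7, using that the center of $GL(2,q)$ is a $p'$-group) to identify $C_G(\bar{x})$ with the image of the centralizer upstairs, whereas you resolve the scalar ambiguity $gug^{-1}=\lambda u$ directly by comparing characteristic polynomials — an equally valid, self-contained alternative that in fact gives the equality $C_G(x)=P$.
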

	\begin{proof}

		Let $\Gamma=GL(2,p^r)$ and $F$ be a field of order $p^r$. Let $A =  \begin{bmatrix} 1&t\\0&1 \end{bmatrix}$ for some $t\in F^*$ and pick $B=   \begin{bmatrix} a&b\\c&d \end{bmatrix}\in \Gamma$ such that $AB=BA$. It follows easily  that $a=d$ and $c=0$, that is,
		
		$$B= \begin{bmatrix} a&0\\0&a \end{bmatrix} \begin{bmatrix} 1&b/a\\0&1 \end{bmatrix}\in Z(\Gamma)Q$$
		where $Q$ is the Sylow $p$-subgroup of $\Gamma$ which consists of upper triangular matrices. Thus, we obtain $C_\Gamma(A)\leq Z(\Gamma)Q$. Write $\overline \Gamma =\Gamma/Z(\Gamma)=G$. Since $Z(\Gamma)$ is a $p'$-subgroup of $\Gamma$, we get  $C_G(\overline A)=C_{\overline \Gamma}(\overline A)=\overline{C_\Gamma(A)}\leq \overline Q$ by \cite[Lemma 7.7]{1}. Note that $\overline Q$ is a Sylow $p$-subgroup of $G$, and so by taking an appropriate conjugate we obtain that $C_G(x)\leq P$ for any $1\neq x\in P$.
	\end{proof}
	
	\begin{lemman}\label{elimination of PSL(2,3)}
		Let $G\cong PSL(2,3^r)$ and $H\cong PSL(2,3)$ be a subgroup of $G$. Suppose that $U$ is a subgroup of $G$ which is normalized by $H$ and which has trivial intersection with $H$. Then $U=1$.
	\end{lemman}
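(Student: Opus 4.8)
The plan is to exploit the classical classification of the subgroups of $PSL(2,q)$ due to Dickson. Since $H$ normalizes $U$, the product $HU=UH$ is a subgroup of $G$, and because $H\cap U=1$ we have $U\lhd HU$ with $HU/U\cong H/(H\cap U)\cong H\cong PSL(2,3)\cong A_4$. In particular $|HU|=12\,|U|$, so it suffices to prove that $HU\cong A_4$. As a first observation I would record that $HU$ contains a copy of $A_4$, hence a Klein four-subgroup, so the Sylow $2$-subgroups of $HU$ are non-cyclic.

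Next I would invoke Dickson's list of the subgroups of $PSL(2,3^r)$: cyclic groups, dihedral groups, the Borel-type Frobenius groups $E_{3^m}\rtimes C_t$ with $C_t$ cyclic, the three exceptional groups $A_4,S_4,A_5$, and the subfield subgroups $PSL(2,3^m)$ and $PGL(2,3^m)$. The cyclic and the Borel-type groups have cyclic Sylow $2$-subgroups, and a dihedral group contains no copy of $A_4$ since all of its subgroups are cyclic or dihedral; hence none of these can equal $HU$. Therefore $HU$ is isomorphic to one of $A_4$, $S_4$, $A_5$, $PSL(2,3^m)$, or $PGL(2,3^m)$.

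I would then eliminate every possibility except $HU\cong A_4$ by inspecting normal subgroups and their quotients. The group $S_4$ has no normal subgroup with quotient $A_4$ (its proper normal subgroups are $V_4$ and $A_4$, yielding quotients $S_3$ and $C_2$); the groups $A_5$ and $PSL(2,3^m)$ with $m\ge 2$ are simple and so admit no quotient isomorphic to $A_4$; and in $PGL(2,3^m)$ the only candidate normal subgroup $U$ with $U\cap H=1$ would be the simple index-$2$ subgroup $PSL(2,3^m)$. But $A_4$ has no subgroup of index $2$, so the map $H\to PGL(2,3^m)/PSL(2,3^m)\cong C_2$ is trivial, forcing $H\le PSL(2,3^m)=U$ and contradicting $H\cap U=1$. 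Consequently $HU\cong A_4$, and $U=1$.

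The main point requiring care is the reduction through Dickson's theorem: one must verify precisely that $A_4$ fails to embed in the dihedral and Borel-type subgroups, and in the $PGL(2,3^m)$ case that $H$ necessarily lands inside the simple subgroup $PSL(2,3^m)$. Both hinge on the two elementary facts about $A_4$ that it has non-cyclic Sylow $2$-subgroups and no subgroup of index $2$, so the bookkeeping across the cases is the only real obstacle and is entirely routine once these facts are in hand.
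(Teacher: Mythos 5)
Your proof is correct, and it takes a genuinely different route from the paper's. You apply Dickson's classification of \emph{all} subgroups of $PSL(2,3^r)$ directly to $HU$, note that $U\lhd HU$ with $HU/U\cong H\cong A_4$, and eliminate every isomorphism type on the list except $A_4$ using only two elementary facts about $A_4$ (non-cyclic Sylow $2$-subgroups, no subgroup of index $2$) together with the normal subgroup lattices of $S_4$, $A_5$, $PSL(2,3^m)$ and $PGL(2,3^m)$; your case analysis is complete (the degenerate case $PGL(2,3)\cong S_4$, where the index-$2$ subgroup is not simple, is covered by your separate $S_4$ case, and the elementary abelian $3$-subgroups fall under your Borel-type case with trivial cyclic part). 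The paper instead takes a \emph{minimal counterexample} $G$, embeds $HU$ in a maximal subgroup $M$, and runs through King's list of maximal subgroups \cite[Corollary 2.2]{2}: the Borel case is excluded because $A_4$ is not $3$-closed, the subfield case $M\cong PSL(2,3^{r_0})$ by minimality of $G$, the case $M\cong A_5$ by inspection, and the case $M\cong PGL(2,3^{r_0})$ by first applying minimality inside the index-$2$ subgroup $M_0\cong PSL(2,3^{r_0})$ to force $|U|=2$ and $[H,U]=1$, then deriving a contradiction from the centralizer lemma (Lemma \ref{centralizer}), which places $U$ inside a Sylow $3$-subgroup. So you trade the paper's induction plus its auxiliary centralizer lemma for the heavier citation of Dickson's full subgroup theorem; your argument is more direct and self-contained at the level of this lemma (in particular your $PGL$ case --- the map $H\to PGL/PSL\cong C_2$ is trivial, so $H\leq PSL=U$, contradicting $H\cap U=1$ --- needs no centralizer computation), while the paper's version gets by with the maximal-subgroup list alone and reuses Lemma \ref{centralizer}, which it needs elsewhere (in the proof of Lemma \ref{classification}) in any event.
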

	
	\begin{proof}
		Let $G$ be a minimal counter example to the lemma. Note that the order of $G$ is $q(q-1)(q+1)/2$ where $q=3^r$. Let now $M$ be a maximal subgroup of $G$ that contains the subgroup $HU$. The possible structure of $M$ is given in \cite[Corollary 2.2]{2}(a)-(h). Since $\pi(H)=\{2,3\}\subseteq \pi(M)$ and $q=3^r$ for $r\geq 2$, the group $M$ can be only one of the groups described in $(c),(d),(e)$ or $(h)$ of \cite[Corollary 2.2]{2}. We shall complete the proof by obtaining a separate contradiction for each case below.
		
		Suppose that \cite[Corollary 2.2 (c)]{2} holds. Then $M$ is of order $q(q-1)/2$ as $q$ is odd, and so is the normalizer of a Sylow $3$-subgroup. It follows that $M$ is $3$-closed and hence $H\cong A_4$ is also $3$-closed, which is not the case.
		
		Suppose that \cite[Corollary 2.2 (d)]{2} holds, that is, $M\cong PSL(2,3^{r_0})$ for some $r_0<r$. It follows that $U=1$  by the minimality of $G$, which is a contradiction.

		Suppose that \cite[Corollary 2.2 (e)]{2} holds. Then $M\cong PGL(2,3^{r_0})$ for some $r_0<r$. Set $q_0=3^{r_0}$ and let $M_0$ be the subgroup of $M$ isomorphic to $PSL(2,3^{r_0})$. Note that the order of $M$ is $q_0(q_0^2-1)$, and so $|M:M_0|=2$. Clearly $H\leq M_0$ and $H$ normalizes $U\cap M_0$ as $M_0\lhd M.$ By the minimality of $G$, we see that $U\cap M_0=1$, and so $|U|=2$ as $|U|>1.$ It follows then that $[H,U]=1$ as $H$ normalizes $U$. Let now $h\in H$ be of order $3$. Then $U\leq C_M(h)\leq P$ where $P$ is a Sylow $3$-subgroup of $M$ by Lemma \ref{centralizer}, which leads to a contradiction.
		
		Finally suppose that \cite[Corollary 2.2 (h)]{2} holds. Then $M\cong A_5$ and so it is apparent that the subgroup $H\cong A_4$ does not normalize any nontrivial such $U$.

	\end{proof}

	\begin{lemman}\label{3 cases}
		Let $G=L(q^f)$ be a simple group of Lie type over the field $F$ of order $q$ where $q$ is a power of a prime $u$, and let $a$ be the  automorphism of $G$ induced by the field automorphism $x\mapsto x^q$. Suppose that $O^{u'}(C_G(a))=L(q)$ is not isomorphic to one of $PSL(2,2)$, $PSL(2,3)$ and $Sz(2)$. Then there is no nontrivial subgroup $K$ of $G$ which is normalized by $C_G(a)$ and which has trivial intersection with $C_G(a)$.
	\end{lemman}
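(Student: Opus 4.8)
The plan is to assume $K\neq 1$ and derive a contradiction by determining the overgroups of the subfield subgroup $H:=O^{u'}(C_G(a))=L(q)$. First I would record the standing reductions. Since $H\trianglelefteq C_G(a)$ and $C_G(a)$ normalizes $K$, the group $H$ normalizes $K$ and $H\cap K\leq C_G(a)\cap K=1$; moreover the three excluded isomorphism types are exactly the non-simple members of the families in play, so $H$ is a nonabelian simple group. As $K$ is normalized both by $H$ and by itself, $K\trianglelefteq\langle H,K\rangle=HK$. If $HK=G$, then $K\trianglelefteq G$, and the simplicity of $G$ together with $1\neq K$ and $H\cap K=1$ gives an immediate contradiction; hence $HK$ is contained in a proper maximal subgroup $M$ of $G$.

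The heart of the argument is to classify the maximal subgroups $M$ containing the subfield subgroup $H=L(q)$, in the spirit of Lemma \ref{elimination of PSL(2,3)}. For the rank one families this is completely explicit: using the list of maximal subgroups of $PSL(2,q^f)$ (Dickson's theorem, \cite[Corollary 2.2]{2}) and Suzuki's classification for $Sz(q^f)$, any maximal subgroup large enough to contain the perfect group $L(q)$ must be the normalizer $N_G(L(q_0))$ of a subfield subgroup with $q\mid q_0\mid q^f$ and $q_0<q^f$. The remaining geometric types, namely Borel subgroups, torus normalizers, and the finitely many bounded subgroups such as $A_4,S_4,A_5$, are either solvable or of bounded order and so cannot contain $L(q)$; here Lemma \ref{centralizer} is useful to control the unipotent overgroups. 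I expect this classification to be the main obstacle, since for higher rank $L$ one must invoke the structure theory of subfield subgroups rather than an elementary order count.

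With $M=N_G(L_0)$ and $L_0:=L(q_0)\trianglelefteq M$, I would finish by induction on $f$, the case $f=1$ being vacuous. The subgroup $K_0:=K\cap L_0$ is $H$-invariant, because $H\leq L_0$ forces $L_0$ to be $H$-invariant, and $H\cap K_0=1$; inside $L_0=L(q_0)$ the group $H=L(q)$ is again $O^{u'}$ of the centralizer of the field automorphism $x\mapsto x^q$. If $q_0<q^f$, the inductive hypothesis yields $K_0=1$, whereas if $q_0=q$, then $M=N_G(H)$, so $K$ normalizes $H$ and again $K_0=K\cap H=1$. In either case $K\cong K/K_0$ embeds into $M/L_0\leq \mathrm{Out}(L_0)$, a group whose order is negligible compared with $|H|$. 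Consequently the conjugation homomorphism $H\to \mathrm{Aut}(K)$ has image a quotient of the simple group $H$, hence trivial (an isomorphic copy of $H$ cannot lie in $\mathrm{Aut}(K)$ by order), so $[H,K]=1$ and $K\leq C_G(H)$. Finally, the subfield subgroup $L(q)$ acts absolutely irreducibly on the natural module, whence $C_G(H)=C_G(L(q))=1$ and $K=1$, contradicting $K\neq 1$ and completing the proof.
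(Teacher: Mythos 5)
Your overall skeleton --- overgroups of the subfield subgroup are again subfield subgroups, induct on the field degree, finish with a commutator/centralizer argument --- is the same as the paper's, but the load-bearing step is missing, and you flag it yourself: the classification of the (maximal) overgroups of $H=O^{u'}(C_G(a))=L(q)$ inside $G=L(q^f)$ for \emph{arbitrary} Lie type. Dickson's theorem and Suzuki's list settle only $PSL(2,q^f)$ and $Sz(q^f)$; for higher rank (e.g.\ $PSL_n$, $n\geq 3$, or the exceptional types) this is a genuinely deep theorem, not a routine order count, and your ``I expect this classification to be the main obstacle'' leaves the lemma unproved in exactly the cases where it is hard. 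The paper imports this wholesale from Burgoyne--Griess--Lyons \cite[Theorem 1]{Bu}: every subgroup of $G$ containing $O^{u'}(C_G(a))$ lies between $O^{u'}(C_G(a^r))$ and $C_G(a^r)^*$ for some $r\mid f$. Note also that the paper applies this not to a maximal subgroup containing $HK$ but directly to the subgroup $C_G(a)K$ (a group because $C_G(a)$ normalizes $K$), which avoids maximal subgroups entirely: $r=f$ forces $G=C_G(a)K$ and contradicts simplicity; $r=1$ makes $K$ normalize $C_G(a)$, so $[C_G(a),K]\leq C_G(a)\cap K=1$; and for $1<r<f$ induction (on the divisor structure, applied to $\langle a\rangle/\langle a^r\rangle$ acting on $C_G(a^r)$) gives $C_G(a^r)\cap K=1$, whence $[O^{u'}(C_G(a^r)),K]\leq C_G(a^r)\cap K=1$ and faithfulness of the action of $K$ on the subfield subgroup yields $K=1$.

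Two secondary points. First, your $\mathrm{Aut}(K)$ order argument is unsound as stated: $K$ embeds in $M/L_0$, hence (granting $C_M(L_0)=1$, which itself needs justification) into $\mathrm{Out}(L_0)$, but subgroups of $\mathrm{Out}(L_0)$ need not be cyclic and the automorphism group of a small solvable group can certainly contain nonabelian simple groups, so ``trivial by order'' does not follow. Fortunately the step is unnecessary: since $H\leq L_0\trianglelefteq M$ and $K\leq M$ with $[H,K]\leq K$, you get $[H,K]\leq L_0\cap K=K_0=1$ immediately --- the same commutator trick the paper uses. Second, your closing claim $C_G(H)=1$ ``by absolute irreducibility on the natural module'' only makes sense for classical types; for exceptional groups one again needs the BGL-type fact that a subfield subgroup (excluding the degenerate cases $PSL(2,2)$, $PSL(2,3)$, $Sz(2)$, which is precisely why the lemma excludes them) has trivial centralizer in $G$. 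Your inductive step inside $L_0$ is fine once you observe that $K_0$ is normalized by all of $C_{L_0}(a)\leq C_G(a)$, not merely by $H$, so the inductive hypothesis genuinely applies.
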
	
	\begin{proof} 	Let $K$ be a subgroup of $G$ which is normalized by $C_G(a)$ and which has trivial intersection with $C_G(a)$. By \cite[Theorem 1]{Bu}, there exists a positive integer $r$ dividing $f$ such that $$L(q^r)\cong O^{u'}(C_G(a^r))\le C_G(a)K\le C_G(a^r)^*$$ where $C_G(a^r)^*$ is generated by $Inn (C_G(a^r))$ and some diagonal automorphisms of the simple group $O^{u'}(C_G(a^r))$. Note that $r<f$ because otherwise $G=C_G(a)K$ which implies by the  simplicity of $G$ that $K=1.$ If $r=1$ then $C_G(a)K\leq Aut (C_G(a))$ and so $K$ normalizes $C_G(a)$, that is, $K=1$ as desired. Then by induction applied to the action of $\langle a\rangle/\langle a^r\rangle$ on $C_G(a^r)$ with $C_G(a^r)\cap K$ we obtain $C_G(a^r)\cap K=1.$ It follows that $C_G(a^r)=C_G(a)$ and so $[C_G(a^r),K]\le C_G(a^r)\cap K=1.$ Due to the faithful action of $K$ on $C_G(a)$ we get $K=1.$
	\end{proof}

	\begin{lemman}\label{lie type}
		Let $A$ be a nontrivial automorphism  group  of a nonabelian simple group $G$  and $(|A|,|G|)=1$. Then $G$ is a simple group of Lie type and $A$ is cyclic.
	\end{lemman}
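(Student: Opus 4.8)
The plan is to realize $A$ as a subgroup of the outer automorphism group of $G$ and then read off both conclusions from the classification of finite simple groups together with the known structure of these outer automorphism groups. Regarding $A$ as a subgroup of $\mathrm{Aut}(G)$, I would first note that $A\cap \mathrm{Inn}(G)$ has order dividing both $|A|$ and $|\mathrm{Inn}(G)|=|G|$, so the coprimality hypothesis forces $A\cap \mathrm{Inn}(G)=1$. Hence the natural map $\mathrm{Aut}(G)\to \mathrm{Out}(G)$ embeds $A$ isomorphically into $\mathrm{Out}(G)$; in particular $|A|$ divides $|\mathrm{Out}(G)|$ and is coprime to $|G|$. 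Since $G$ is nonabelian simple, by the classification it is an alternating group, a sporadic group, or a simple group of Lie type.

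First I would dispose of the alternating and sporadic cases. For $G=A_n$ one has $\mathrm{Out}(G)$ of order $2$ (order $4$ when $n=6$), and for every sporadic $G$ the group $\mathrm{Out}(G)$ has order $1$ or $2$; thus in all these cases $\mathrm{Out}(G)$ is a $2$-group. On the other hand $|G|$ is even, so the coprimality hypothesis makes $|A|$ odd. An odd-order subgroup of a $2$-group is trivial, so $A=1$, contradicting the nontriviality of $A$. Therefore $G$ must be a simple group of Lie type, which is the first assertion.

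For the cyclicity, I would invoke the structure theorem for the automorphism groups of the simple groups of Lie type (Steinberg; see Gorenstein--Lyons--Solomon): writing $\mathrm{Outdiag}(G)$ for the image of the diagonal automorphisms, $\Phi$ for the cyclic group of field automorphisms and $\Gamma$ for the group of graph automorphisms, one has a normal series
\[
1\;\trianglelefteq\;\mathrm{Outdiag}(G)\;\trianglelefteq\;\mathrm{Outdiag}(G)\,\Phi\;\trianglelefteq\;\mathrm{Out}(G)
\]
with successive factors $\mathrm{Outdiag}(G)$, $\Phi$ (cyclic) and $\Gamma$. The key arithmetic input is that every prime dividing $|\mathrm{Outdiag}(G)|$ or $|\Gamma|$ already divides $|G|$: indeed $|\mathrm{Outdiag}(G)|$ divides the relevant $q\mp 1$ or a small power of $2$, and $|\Gamma|$ divides $6$ with its prime divisors lying in $\{2,3\}\subseteq \pi(G)$ (the only case with $3\mid|\Gamma|$ being $D_4(q)$, where $3\mid|G|$). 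Since $(|A|,|G|)=1$, the image of $A$ in $\Gamma$ and its intersection with $\mathrm{Outdiag}(G)$ are both trivial, so $A$ embeds into the cyclic middle factor $\Phi$; hence $A$ is cyclic.

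The main obstacle is not logical but bookkeeping: one must appeal to the classification and verify, family by family, that the diagonal and graph contributions to $\mathrm{Out}(G)$ have order built only from primes dividing $|G|$. Once that structural fact is in hand, the coprimality hypothesis does all the work, collapsing $A$ into the field-automorphism part.
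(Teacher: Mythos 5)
Your proposal is correct and takes essentially the same route as the paper's own proof: embed $A$ into $\mathrm{Out}(G)$ via the coprimality hypothesis, rule out alternating and sporadic groups because their outer automorphism groups are $2$-groups while $|A|$ must be odd (Feit--Thompson), and then use the Gorenstein--Lyons--Solomon description of $\mathrm{Out}(G)$ for groups of Lie type, where the diagonal and graph contributions have all their prime divisors in $\pi(G)$, to collapse $A$ into the cyclic field-automorphism factor $\Phi$. There is no gap; your argument matches the paper's proof step for step.
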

	
	\begin{proof} It is clear that $A$ is isomorphic to a subgroup of $Out(G)=Aut(G)/Inn(G)$. According to the classification of finite simple groups \cite[Theorem~0.1.1]{AschLyoSmSol} and the well-known information on outer automorphism groups (see \cite[Theorem~5.2.1 and Tables 5.3a--5.3.z]{D}), one of the following statements holds:
		\begin{itemize}
			\item $G$ is isomorphic to an alternating group of degree $\geq 5$ and $|Out(G)|\in\{2,4\}$;
			\item $G$ is isomorphic to one of 26 sporadic groups and $|Out(G)|\in\{1,2\}$;
			\item $G$ is isomorphic to a simple group of Lie type.
		\end{itemize}
		Since $|G|$ is even (by the Feit--Thompson theorem \cite{FT}, for example), $G$ must be isomorphic to a simple group of Lie type. Then  $Out(G)$ is solvable. More exactly, according to \cite[Theorem~2.5.12]{D} $Out(G)$ has a normal subgroup $Outdiag(G)$ such that $\pi(Outdiag(G))\subseteq\pi(G)$. Moreover, $\overline{O}=Out(G)/Outdiag(G)$ has a normal cyclic subgroup $\Phi$ (isomorphic to the automorphism group of the ground field) such that $\overline{O}/\Phi\in\{1,2\}$ or $G\cong D_4(q)$ and $\overline{O}/\Phi\cong S_3$. In all cases $\pi(\overline{O}/\Phi)\subseteq\pi(G)$. This implies that $A$ is isomorphic to a subgroup of $\Phi$. In particular, $A$ is cyclic.
	\end{proof}
	\begin{lemman}\label{Suzuki}
		Let $G=Sz(q)$, where $q=2^r$ for some odd $r$. Then every solvable subgroup  $H$ of $G$ such that $5\in\pi(H)$ is contained in a subgroup $M$ of $G$ with the following properties.
		\begin{itemize}
			\item $|M|=4(q-\varepsilon\sqrt{2q}+1)$, where $\varepsilon\in\{+,-\}$ is uniquely defined by the relation $5\mid (q-\varepsilon\sqrt{2q}+1)$;
			\item $M$ is the Frobenius group with  cyclic kernel of order $q-\varepsilon\sqrt{2q}+1$ and cyclic complement of order $4$.
		\end{itemize}
		In particular, if  $U$ is a nontrivial $p$-subgroup of $G$ such that $5$ divides $|N_G(U)|$ then $U$ is cyclic and $|U|$ divides $q-\varepsilon\sqrt{2q}+1$.
	\end{lemman}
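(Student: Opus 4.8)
The plan is to combine elementary congruences modulo $5$ with Suzuki's description of the subgroup structure of $Sz(q)$, and to induct on $|G|$ in order to absorb the subfield subgroups. First I would record the arithmetic. Writing $q=2^r$ with $r$ odd one gets $q\equiv 2$ or $3\pmod 5$, so $5\nmid q(q-1)$, whereas $q^2\equiv -1\pmod 5$ gives $5\mid q^2+1$. Since $q^2+1=(q-\sqrt{2q}+1)(q+\sqrt{2q}+1)$ and the two factors differ by $2\sqrt{2q}$, a power of $2$, the prime $5$ divides exactly one of them; that factor is $q-\varepsilon\sqrt{2q}+1$ and defines $\varepsilon$ uniquely. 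As $|G|=q^2(q-1)(q^2+1)$, a Sylow $5$-subgroup of $G$ is cyclic and lies in the cyclic maximal torus $A:=A_\varepsilon$ of order $q-\varepsilon\sqrt{2q}+1$, whose normaliser $M:=N_G(A)$ is the Frobenius group of order $4(q-\varepsilon\sqrt{2q}+1)$ with kernel $A$ and cyclic complement of order $4$ described in the statement.

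Next I would run the induction on $|G|$. Let $H$ be solvable with $5\in\pi(H)$ and embed $H$ in a maximal subgroup $M'$ of $G$. By Suzuki's classification the possibilities for $M'$ are the point stabiliser of order $q^2(q-1)$, the torus normaliser of order $2(q-1)$, the two Frobenius normalisers $N_G(A_\pm)$ of orders $4(q\mp\sqrt{2q}+1)$, and the subfield subgroups $Sz(q_0)$ with $q=q_0^m$, $m$ prime. By the computation above the orders $q^2(q-1)$, $2(q-1)$ and $4(q+\varepsilon\sqrt{2q}+1)$, the last corresponding to the other torus, are all coprime to $5$, so $5\mid|M'|$ forces $M'$ to be either $M$ itself, in which case we are done, or a subfield subgroup $Sz(q_0)$.

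The subfield case is the main obstacle, and I would resolve it by tracking an element of order $5$ through the tori. Pick $x\in H$ of order $5$. By the induction hypothesis applied inside $Sz(q_0)$ (which again satisfies the hypotheses, since $q_0=2^{r_0}$ with $r_0\mid r$ odd and $|Sz(q_0)|<|G|$), the subgroup $H$ lies in a Frobenius subgroup $N_{Sz(q_0)}(A_0)$ whose kernel $A_0$ is the cyclic torus of $Sz(q_0)$ containing $x$, so that $A_0=C_{Sz(q_0)}(x)$ by self-centrality. In $Sz(q)$ the three classes of cyclic tori are self-centralising and each nonidentity element of odd order lies in a unique one; as $x$ has order $5$ this torus is a conjugate of $A$, whence $C_G(x)=A$. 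Therefore $A_0=C_{Sz(q_0)}(x)\le C_G(x)=A$, and moreover $C_G(A_0)=A$ since every nonidentity element of the cyclic group $A_0\le A$ has centraliser $A$. Because $H$ normalises $A_0$ it normalises $C_G(A_0)=A$, so $H\le N_G(A)=M$, closing the induction.

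Finally, for the ``in particular'' clause let $U$ be a nontrivial $p$-subgroup with $5\mid|N_G(U)|$ and choose $y\in N_G(U)$ of order $5$. Then $U\langle y\rangle$ is solvable with $5$ in its order, so by the main assertion it lies in a conjugate of $M$, a Frobenius group with kernel $A$ of odd order $q-\varepsilon\sqrt{2q}+1$ and complement of order $4$. If $p=2$ then $U$ lies in a Sylow $2$-subgroup $Q$; but the Sylow $2$-subgroups of $Sz(q)$ form a trivial-intersection family, so $U$ determines $Q$ uniquely and $N_G(U)\le N_G(Q)$, a point stabiliser of order $q^2(q-1)$ coprime to $5$, contradicting $5\mid|N_G(U)|$. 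Hence $p$ is odd, so $U$ lies in the Frobenius kernel $A$, which is a Hall $2'$-subgroup; consequently $U$ is cyclic and $|U|$ divides $q-\varepsilon\sqrt{2q}+1$, as required. The step I expect to be most delicate is the subfield reduction, precisely the passage $A_0\le C_G(x)=A$ and $C_G(A_0)=A$, which is where the self-centralising and unique-torus properties of the Suzuki tori are essential.
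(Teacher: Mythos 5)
Your proof is correct and follows essentially the same route as the paper's: Suzuki's classification of the subgroups of $Sz(q)$, the mod-$5$ arithmetic isolating the factor $q-\varepsilon\sqrt{2q}+1$ of $q^2+1$, induction through the subfield subgroups $Sz(q_0)$, and the self-centralizing property of the torus (Suzuki's Proposition~16) to pull $H$ back into $N_G(A)$. The only differences are cosmetic: the paper tracks the normal cyclic $2$-complement $T$ of $H$ where you track an element $x$ of order $5$ and the subfield torus $A_0=C_{Sz(q_0)}(x)$, it derives (rather than quotes) the Frobenius structure of $N_G(A)$ with cyclic complement of order $4$ by embedding $N_G(A)/A$ into the automorphism group of the subgroup of order $5$, and it leaves implicit the exclusion of $p=2$ in the final clause, which you make explicit via the TI property of the Sylow $2$-subgroups.
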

	
	\begin{proof} According to \cite[Theorem~9]{Suz}, every proper subgroup of $G$ is conjugate to a subgroup of one of the following subgroups:
		\begin{itemize}
			\item Frobenius group of order $q^2(q-1)$;
			\item dihedral group $B_0$ of order $2(q-1)$;
			\item the normalizer $B_1$ of a cyclic group $A_1$ of order $q-\sqrt{2q}+1$, $|B_1|=4|A_1|$;
			\item the normalizer $B_2$ of a cyclic group $A_2$ of order $q+\sqrt{2q}+1$, $|B_2|=4|A_2|$;
			\item $Sz(2^{r_0})$ where $r_0$ divides $r$.
		\end{itemize}
		It is easy to see that $5$ divides only the orders of $Sz(2^{r_0})$ and exactly one of $B_1$ and~$B_2$. We choose $i\in\{1,2\}$ such that $5$ divides $|B_i|$.  It follows by induction from this remark that $H$ contains a normal cyclic 2-complement $T$ and $T$ contains a cyclic subgroup of order $5$. Since $A_i$ contains a Sylow 5-subgroup of $G$, we may assume that $A_i\cap T\ne 1$. By \cite[Proposition~16]{Suz}, the centralizer of every nontrivial element of $A_i$ coincides with $A_i$. Therefore, $T\leq C_G(A_i\cap T)\leq A_i$ and $H$ normalizes $C_G(T)=A_i$. This means that $H\leq B_i$. Moreover, $B_i/A_i=B_i/C_{B_i}(Z)$ is isomorphic to a subgroup of the automorphism group of the subgroup $Z\leq A_i$ of order~$5$. Consequently, $B_i/A_i$ is cyclic. Since $C_{B_i}(a)=A_i$ for every nontrivial $a\in A_i$, $B_i$ is a Frobenius group with the cyclic kernel $A_i$ and a cyclic complement of order $4$.
	\end{proof}

	We need an extension of  \cite[Lemma 3.3]{Y} in the proof of Theorem E.
	\begin{lemman}\label{classification}
		Let $G$ be a nonabelian simple group and let $\alpha$ be a coprime automorphism of $G$ of order $r$. Let $U$ be a nontrivial $\alpha$-invariant $p$-subgroup of $G$ such that $C_U(\alpha)=1$. If $C_G(\alpha)$ normalizes $U$, then one of the following holds:
		
		\begin{enumerate}
			\item[a)] $G=PSL(2,2^r)$ and  $r\geq 5$. Moreover, $p\geq 5$ and  $p$ is a divisor of $2^r+
			1$.
			
			\item[b)] $G=Sz(2^r)$ and $r\geq 7$. Moreover, $p\geq 5$ and  $p$ is a divisor of $2^r\pm \sqrt{2^{r+1}}+1$ where the sign $\pm$ is chosen such that $5$ divides $2^r\pm \sqrt{2^{r+1}}+1$.
		\end{enumerate}
	\end{lemman}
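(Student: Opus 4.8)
The plan is to combine the structural lemmas already established with a hands-on analysis of the subgroup structure of the two surviving families. Since $\langle\alpha\rangle$ acts coprimely on the nonabelian simple group $G$, Lemma~\ref{lie type} tells us that $G$ is a simple group of Lie type over a field of characteristic $u$ and that $\alpha$ may be taken to be a field automorphism of order $r$; write $G=L(q^r)$, where $q$ is the order of the fixed field, so that $O^{u'}(C_G(\alpha))=L(q)$ in the notation of Lemma~\ref{3 cases}. The decisive observation is that $U\cap C_G(\alpha)=C_U(\alpha)=1$, so $U$ is a nontrivial subgroup normalized by $C_G(\alpha)$ meeting it trivially. Lemma~\ref{3 cases} then forbids this \emph{unless} $L(q)$ is isomorphic to $PSL(2,2)$, $PSL(2,3)$ or $Sz(2)$, which reduces the problem to three cases.

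The case $L(q)\cong PSL(2,3)$ is disposed of immediately: here $G\cong PSL(2,3^r)$ and $H:=O^{u'}(C_G(\alpha))\cong PSL(2,3)$ is a subgroup normalizing $U$ with $U\cap H\le U\cap C_G(\alpha)=1$, so Lemma~\ref{elimination of PSL(2,3)} gives $U=1$, a contradiction. For $L(q)\cong PSL(2,2)$ we have $G=PSL(2,2^r)$ with $C_G(\alpha)\cong S_3$, and coprimality forces $r$ to be odd (otherwise $2\mid(r,|G|)$). I would first rule out $p=2$: a nontrivial $2$-subgroup lies in a unique Sylow $2$-subgroup, so $N_G(U)$ lies in a Borel subgroup of order $2^r(2^r-1)$, which has order prime to $3$ when $r$ is odd and so cannot contain the element of order $3$ of $C_G(\alpha)\le N_G(U)$. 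For odd $p$ the group $U$ is cyclic and lies in a torus, with $N_G(U)$ dihedral of order $2(2^r-1)$ or $2(2^r+1)$; only the latter has order divisible by $3$, forcing $U$ into the torus of order $2^r+1$ and hence $p\mid 2^r+1$. A similar argument inside the cyclic torus shows $p\neq 3$, since otherwise the order-$3$ element of $C_G(\alpha)$ would lie in $U$. Finally $r\neq 1,3$ (simplicity, and $3\mid|PSL(2,8)|$), giving $r\ge 5$.

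The case $L(q)\cong Sz(2)$ yields $G=Sz(2^r)$ with $r$ odd and $C_G(\alpha)\cong Sz(2)$, a Frobenius group of order $20$. Since $5$ divides $|C_G(\alpha)|\le|N_G(U)|$, Lemma~\ref{Suzuki} applies and shows that $U$ is cyclic with $|U|$ dividing $q-\varepsilon\sqrt{2q}+1$, where $q=2^r$ and $\varepsilon$ is chosen so that $5$ divides this number; this is exactly the asserted divisibility $p\mid 2^r\pm\sqrt{2^{r+1}}+1$. That $p\neq 2$ is clear as the number is odd, and a short congruence computation modulo~$3$ shows that $3$ divides neither $q-\sqrt{2q}+1$ nor $q+\sqrt{2q}+1$, so $p\ge 5$. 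For the bound on $r$: the value $r=5$ is excluded because then $5=r$ divides $q-\varepsilon\sqrt{2q}+1$, which divides $|G|$, contradicting coprimality; and $r=3$ is excluded because then $q-\varepsilon\sqrt{2q}+1=5$ forces $U$ to be the order-$5$ kernel of the maximal Frobenius subgroup $M$ of Lemma~\ref{Suzuki}, whence $C_G(\alpha)\le N_G(U)=M$ collapses to $C_G(\alpha)=M\ni U$, contradicting $U\cap C_G(\alpha)=1$. Hence $r\ge 7$.

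The reductions through Lemmas~\ref{lie type} and~\ref{3 cases} are essentially formal, so the main work — and the part where I expect the most care to be needed — is the explicit subgroup analysis of $PSL(2,2^r)$ and $Sz(2^r)$: determining which torus $U$ sits in, excluding the primes $2$ and $3$, and extracting the sharp bounds $r\ge 5$ and $r\ge 7$ by eliminating the small fields by hand. The genuinely delicate points are the arithmetic linking the order $r$ of the field automorphism to the factorizations of $2^r\pm 1$ and $2^r\pm\sqrt{2^{r+1}}+1$, and the boundary cases in which the candidate $U$ is forced back inside $C_G(\alpha)$.
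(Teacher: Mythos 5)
Your proposal is correct and follows essentially the same route as the paper: reduce via Lemma~\ref{lie type} and Lemma~\ref{3 cases} to $L(q)\in\{PSL(2,2),PSL(2,3),Sz(2)\}$, kill the $PSL(2,3^r)$ case with Lemma~\ref{elimination of PSL(2,3)}, and then analyze $PSL(2,2^r)$ and $Sz(2^r)$ by locating $U$ in the appropriate cyclic torus and eliminating $p\in\{2,3\}$ and the small values of $r$. Your micro-steps differ only cosmetically from the paper's (e.g.\ you exclude $p=2$ via the TI-property of Sylow $2$-subgroups where the paper uses Lemma~\ref{centralizer}, and you read $p\mid 2^r+1$ off the dihedral normalizer $N_G(U)$ where the paper argues $T=C'\leq C_C(U)$ because $\mathrm{Aut}(U)$ is abelian), and both versions are sound.
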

	
	\begin{proof}
		We see that $G$ is a simple group of Lie type by Lemma \ref{lie type}. It follows that $G=PSL(2,2^r)$ or $Sz(2^r)$ by Lemmas \ref{3 cases} and \ref{elimination of PSL(2,3)}. Set $C=C_G(\alpha)$.
		
		Let $G=PSL(2,2^r)$ and set $q=2^r$.  Then we see that $C=PSL(2,2)\cong S_3$. Suppose first that $p$ is odd. A Sylow $p$-subgroup $P$ of $G$ is cyclic by \cite[Theorem 8.6.9]{Kurz}. If $p=3$, then $U\cap C$ contains an element of order $3$, which is impossible by the hypothesis. Thus $p\geq 5$. We also have $r\geq 5$ as $r$ is coprime to $|C|=6$. We see that $C$ is contained in a maximal subgroup $D$, which is a dihedral group of order $2(q\pm 1)$ (see \cite[Corollary 2.2 (f) and (i)]{2}). Since $r$ is odd, $3$ is coprime to $q-1=2^r-1$, and so $|D|=2(q+1)=2^r+1$.
		
		Let $T$ be the subgroup of $C$ of order $3$. Clearly, $T$ is normalized by $D$, and so $D=N_G(T) $ as $G$ is simple and $D$ is a maximal subgroup of $G$. Now we claim that $p \mid q+1$. Since  $U$ is cyclic, $Aut(U)$ is abelian. It follows that $C/C_C(U)$ is abelian.  We get that $T\leq C_C(U) $ as $T=C'$, and so $U\leq C_G(T)\leq N_G(T)=D$. As $p$ is odd and $|D|=2(q+ 1)$, we have that $p$ divides $q+1$.  Consequently, we observe that if such an $\alpha$-invariant $p$-subgroup $U$ of $G$ exists, it must be contained in $D=N_G(T)$. On the other hand, $D=N_G(T)$ is $\alpha$-invariant  and $\pi(D)\neq \{2,3\}$ as $r\geq 5$. Pick an $\alpha$-invariant Sylow $p$-subgroup $P$ of $D$ for $p\geq 5$.  Clearly, $P$ is normalized by $C$ and $C_{P}(\alpha)=1$, which completes the proof for this case.

		Assume now that $p=2$ and take a  Sylow $p$-subgroup $P$ of $G$ such that $U\leq P$. In this case, $P$ is elementary abelian of order $2^r$ and $|N_G(P)|=2^r(2^r-1)$ (see ~\cite[Table~1]{Bray}). Since $r$ is odd, we have $(|N_G(P)|,3)=1$. 
		It follows from Lemma~\ref{centralizer} that $C_G(U)=P$ which means that $P$ is a normal subgroup of $N_G(U)$. Therefore, $N_G(U)\leq N_G(P)$ and $(|N_G(U)|,3)=1$, which contradicts the fact that $ N_G(U)\geq C\cong S_3$. Thus, the case $p=2$ is impossible.
		
		Next let $G=Sz(q)$ where $q=2^r$ and $r$ is odd. Note that $C\cong Sz(2)$ which is a Frobenius group of order $20$. Denote $T=O_5(C)$.  Then $T\leq C\leq N_G(U)$. In particular, $|N_G(U)|$ is divisible by $5$. By Lemma~\ref{Suzuki} $U$ must be cyclic and $|U|$ must divide $q-\varepsilon\sqrt{2q}+1$, where $\varepsilon=\pm$ and 5 divides $q-\varepsilon\sqrt{2q}+1$.
		
		If $r=3$ then $q-\sqrt{2q}+1=5$. Consequently, $\varepsilon=+$ and $p=|U|=5$. But in this case $UT$ is contained in a Frobenius subgroup with a cyclic kernel and a cyclic complement of order $4$ by Lemma~\ref{Suzuki}. This means that $U=T$ is contained in $C$, a contradiction. Hence, $r>3$.
		
		Since $5$ divides $|G|$ and $|\alpha|=r$, we have $r>5$. Now the desired statement follows from Lemma~\ref{Suzuki}.

	\end{proof}

	\begin{proof}[Proof of Theorem \ref{E}] Let $G$ be a minimal counterexample to the theorem and choose $U$ of minimal possible order. Then $U>1$. It can be easily observed by an induction argument applied to the action of $A$ on $G/O_p(G)$ that $O_p(G)=1$. Let $N$ be a minimal normal $A$-invariant subgroup of $G$. We shall separate the proof into two cases:
	
	\textit{Case 1.}  Assume that $N=G.$ Then $G$ is characteristically simple, that is, $G=G_1 \times \cdots \times G_n$ where $G_i$ are isomorphic nonabelian simple groups and $A$ acts transitively on $\{G_i\; :\; i = 1, \ldots, n\}$. Let now  $B=N_A(G_1)$ and let $X=G_2\times G_3 \times \ldots G_n$. Note that $X$ is a $B$-invariant normal subgroup of $G$. Assume that $X>1$ and set $\overline G=G/X$. Let $A=\cup_{i=1}^n Ba_i$ be a coset decomposition of $A$  with respect to $B$ where $a_1=1$.  We observe that $C_G(A)=\{\; \prod_{i=1}^ng^{a_i}\; :\; g\in C_{G_1}(B)\}$, and hence $$\overline {C_G(A)}=\overline{C_{ G_1}(B)}={C_{\overline G_1 }(B)}=C_{\overline G}(B).$$ Then $C_{\overline G}(B)$ normalizes $\overline U$. Since $\overline U B$ is a Frobenius group, an induction argument applied to the action of $B$ on $\overline G$ yields that $\overline U \leq O_p(\overline G)=1$, that is, $U\leq X$. It follows that $U=1$ as $A$ acts transitively on $\{G_i\; :\; i = 1, \ldots, n\}$. By this contradiction, we get $X=1$, that is, $G$ is simple. We observe by Lemma \ref{lie type} that $A$ is cyclic. Then, appealing to Lemma \ref{classification}, we obtain the final contradiction in Case 1.

	\textit{Case 2.} Assume that $N<G$. By induction applied to the action of $A$ on $N$, it holds that $U\cap N\leq O_p(N)\leq O_p(G)=1.$ Write $\overline G=G/N$. Then by induction applied to the action of $A$ on $\overline G$, we get $1<\overline U \leq O_p(\overline G)$. Let $H/N=O_p(\overline G)$. Assume $H<G$. Clearly, $U\subseteq H$, and so $U\leq O_p(H)\leq O_p(G)$ by induction applied to $H$. This contradiction shows that $H=G$. Since $\overline G$ is a $p$-group, $NU$ is subnormal in $G$. Thus, if $NU<G$, then we get $U\leq O_p(G)$, which is not the case. Thus, $G=NU$. We get $\Phi(U)\leq O_p(G)=1$ by the minimality of $U$, and so $U$ is an elementary abelian $p$-group. Now $N$ is characteristically simple, that is, $N=N_1\times \cdots \times N_k$ where $N_i,\, i=1,\ldots ,k,$ are simple. Notice that $N$ is nonabelian because otherwise $G$ is $p$-separable and the result follows by Theorem \ref{separab}.
	
	Let $\Omega$ denote the set of $N_i, i=1,\ldots ,k$. Then $UA$ acts transitively on $\Omega.$ Let $\Omega _{1}$ be the $U$-orbit on $\Omega$ containing $N_1$, and set $A_{1}=Stab_{A}(\Omega _{1}).$
	
	Suppose first that $A_{1}=1.$  Clearly, we have $Stab_{A}(N_1)\leq A_{1}=1$. Consider the group $X=\prod_{a\in A}{N_1}^{a}$. Then $
	C_{X}(A)=\{ \prod_{a\in A}n^{a}\; :\;  n\in N_1\}.$ Since $U$ centralizes  $C_{X}(A)$,  $X$ is $UA$-invariant and hence $X=N$ by the minimality of $N.$ That is, $k=|A|$ and so there is a $U$- orbit of length $1$ because otherwise we would have $p$ divides $|A|$. Suppose that $U$ normalizes $N_1$. Then $U$ normalizes $N_i$ for each $i$. This forces that $[N_i,U]=1$ for each $i$ as $[C_N(A),U]=1$, and so $[N,U]=1.$ This contradiction shows that $A_{1}\neq 1.$
	
	Let now $S=Stab_{UA_{1}}(N_1)$ and $U_{1}=U\cap S$. Then $\left\vert
	U:U_{1}\right\vert =\left\vert \Omega _{1}\right\vert =\left\vert
	UA_{1}:S\right\vert .$ Notice next that $(\left| S:U_{1}\right| , \left\vert
	U_{1}\right\vert )=1$\, as \,$(\left\vert U\right\vert ,\left\vert
	A_{1}\right\vert )=1$. Let $S_{1}$ be a complement of $U_{1}$ in $S.$ Then we have $
	\left\vert U:U_{1}\right\vert =\left\vert
	U\right\vert \left\vert A_{1}\right\vert /\left\vert U_{1}\right\vert
	\left\vert S_{1}\right\vert $ which implies that $\left\vert A_{1}\right\vert
	=\left\vert S_{1}\right\vert .$ Therefore we may assume
	that $S=U_{1}A_{1},$ that is, $N_1$ is $A_1$-invariant.

	Let $x\in U$ and $1\neq a\in A_{1}$ such that $({N_1}^{x})^{a}={N_1}^{x}$ holds. Then
	$[a,x^{-1}]\in
	U_{1} $ and so $U_{1}x=U_{1}x^{a}=(U_{1}x)^{a}$ implying the existence
	of an element $g\in U_{1}x\cap C_{U}(a).$ Hence $x\in U_1$. It follows that $Stab_{A_1}({N_1}^x)=1$ for every $x\in U\setminus U_{1}.$ More precisely we have shown that $A_{1}$ is a nontrivial subgroup of $A$ stabilizing exactly one element, namely $N_1$, and all
	the remaining orbits of $A_{1}$ are of length $\left\vert
	A_{1}\right\vert .$
	
	The group $A$ acts transitively on
	$\left\{ \Omega _{i}\; :\; i=1,2,\ldots ,s\right\},$ the collection of $U$-orbits on $\Omega$. Let now  $%
	M_{i}=\prod_{M\in \Omega _{i}}M$ \, for $i=1,2,\ldots ,s.$ Suppose that $s>1.$  Then\\ $%
	A_{1}=Stab_{A}(\Omega _{1})$ is a proper subgroup of $A.$ Let $A=\bigcup_{i=1}^{m}{A_1}a_i$ be the coset decomposition of $A$ with respect to $A_1$. Notice that $C_N(A)=\{ \prod_{i=1 }^m{n^{a_i}}\; :\; n\in C_{M_1}(A_1)\}.$ Since $[C_N(A),U]=1$, we have  $[C_{M_1}(A_1), U]=1.$
	Applying
	induction to the action of $A_{1}$ on $M_1U$ we obtain $U=O_p(M_1U)$, that is $[M_1,U]=1$. Then $[M_i,U]=1$ for each $i$, which is impossible. Thus $A_1=A$ and $\Omega=\Omega_1,$ that is, $U$ acts transitively on $\Omega $.

	It follows that $N_1$ is $A$-invariant. Let $Y=\prod_{a\in A}{N_2}^a$. Since $[C_Y(A),U]=1$, we see that $Y$ is $UA$-invariant which is impossible by the minimality of $N$. Therefore we may assume that $N$ is simple. Moreover, $C_G(N)\cap N=Z(N)=1$. Consequently, $C_G(N)$ is isomorphic to a subgroup of $G/N\cong U$. Therefore, $C_G(N)$ is a normal $p$-subgroup of $G$ and $O_p(G)=1$ implies $C_G(N)=1$. It follows from the three subgroups lemma and the equalities $$[C_{A}(N), N,G]=1\text{ and }[N,G, C_{A}(N)]=[N,C_{A}(N)]=1$$ that $$[G,C_{A}(N),N]=1, \quad [G,C_{A}(N)]\leq C_G(N)=1\quad\text{and}\quad C_{A}(N)= C_{A}(G)=1.$$ This means that  $G$ and $A$ are isomorphically embedded in $Aut(N)$. Moreover, the kernel $C_{GA}(N)$ of the natural homomorphism $GA\rightarrow Aut(N)$ is also trivial because $|G|$ and $|A|$ are coprime. Thus we may consider $GA$ as a subgroup of $Aut(N)$.
	
	Note that $N$ must be isomorphic to a group of Lie type by Lemma \ref{lie type} as it admits a coprime automorphism. We need now some information about the automorphism groups of the simple groups of Lie type given in \cite[Theorem 2.5.12]{D}. There are three subgroups $Inndiag(N)$, $\Phi$, and $\Gamma$ in
	$Aut(N)$ such that every two of them have the trivial intersection and  $$Aut(N)=Inndiag(N)\Phi\Gamma.$$
	Here $\Phi$ is the field automorphism group of $N$, $\Gamma$ is the graph automorphism group, and $Inndag(N)$ is the inner-diagonal automorphism group of~$N$. The subgroup  $Inndag(N)$ is  normal in $Aut(N)$ and contains $Inn(N)$ by \cite[Theorem 2.5.12]{D}. We have that $$\pi(\Gamma)\cup\pi(Outdiag(N))\subseteq\pi(N),$$ where $Outdiag(N)=Inndiag(N)/Inn(N)$. Moreover $[\Phi\Gamma, \Phi] = 1$. 
	
	If follows from the Schur--Zassenhaus theorem that $A$ is conjugate in $Aut(N)$ to a subgroup of $\Phi$ and we may assume that
	$A \leq \Phi$. Moreover, as $UA$ is a Frobenius group, we have
	$$U = [U, A] \leq [Aut(N), \Phi]\leq [\Phi\Gamma, \Phi]\,Inndiag(N)= Inndiag(N).$$
	Furthermore, $U \cap N = 1$ implies that $U$ is isomorphic to a subgroup of
	$Outdiag(N)$. In particular, $d = |Outdiag(N)|>1$. This means that $N$ is not a Suzuki group and $2,3\in\pi(N)$ by \cite{FT} and \cite[Chapter II, Corollary 7.3]{Gl}.
	
	Assume that $N$ is not isomorphic to $$\begin{array}{ll}
		PSL^+(n,q)=PSL(n,q)\cong A_{n-1}(q) & \text{ and} \\
		PSL^-(n,q)=PSU(n,q)\cong {^2A}_{n-1}(q). &
	\end{array}
	$$
	Then $d\leq 4$ by \cite[Theorem 2.5.12]{D}, and $|U|\leq 4$.  In this case, $A$ is a $\{2,3\}$-group since $A\leq Aut(U)$, which contradicts the fact that $(|A|,|G|)=1$.
	
	Thus, we may assume that $N=PSL^\varepsilon(n,q)$, where $\varepsilon\in\{+,-\}$.
	It follows from \cite[Theorem 2.5.12]{D}, that $Outdiag(N)$ is cyclic of order $d=(n,q-\varepsilon1)$ (in fact, $Inndiag(N)\cong PGL^\varepsilon(n,q)$ in this case). This means that the elementary abelian $p$-group $U$ is cyclic, $|U|=p$, and $p\leq d\leq n$. Now, take $r\in\pi(A)$. Then $$r\leq|A|\leq|Aut(U)|=p-1<n.$$ Moreover, $(|A|,|G|)=1$ implies  $(r,2q)=1$ and $r$ divides $q^{r-1}-1=q^{r-1}-(\varepsilon1)^{r-1}$. But this means that $r$ divides
	$$
	|N|=\frac{1}{d}q^{n(n-1)/2}\prod_{i=1}^n(q^i-(\varepsilon1)^i),
	$$ which contradicts
	the fact that $|A|$ and $|G|$ are coprime.
	This completes the proof. 
	\end{proof}
	
	Suppose that $A$ acts on $G$. Let $S_p(G, A)$ denote the set of all $A$-invariant Sylow $p$-subgroups of $G$, and $(O, A)_p(G)$ denote the intersection of all $P\in S_p(G,A).$
	
	\begin{corollaryn} Let $A$ act coprimely on  $G$ and let $p$ be a prime which is coprime to $|C_G(a)|$ for all $1\ne a\in A$. Then $(O, A)_p(G) = O_p(G)$ if the following hold:
		
		\textit{(i)  $G$ is $PSL(2,2^r)$ free for all $1\neq r$ dividing $|A|$ in case where $p \mid 2^r+1$,}
		
		\textit{(ii) $G$ is $Sz(2^r)$ free for all $1\neq r$ dividing $ |A|$ in case where $p \mid 4^r+1$.}
	\end{corollaryn}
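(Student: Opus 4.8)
The plan is to recognize the subgroup $U=(O,A)_p(G)$ as exactly the type of $A$-invariant $p$-subgroup handled by Theorem~\ref{E}, and then to apply that theorem almost verbatim. If $p\nmid|G|$ both sides are trivial, so I would assume $p\mid|G|$; coprimeness of the action guarantees that $A$-invariant Sylow $p$-subgroups exist, so $S_p(G,A)\neq\emptyset$ and, being the intersection of $A$-invariant subgroups, $U$ is a well-defined $A$-invariant $p$-subgroup of $G$. The easy inclusion comes first: since $O_p(G)$ is the intersection of \emph{all} Sylow $p$-subgroups of $G$ and every member of $S_p(G,A)$ is in particular a Sylow $p$-subgroup, we obtain $O_p(G)\leq U$.

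Next I would check the two hypotheses of Theorem~\ref{E} for $U$. For the fixed-point condition, note that for each $1\neq a\in A$ the subgroup $C_U(a)\leq C_G(a)$ is a $p$-group, while $|C_G(a)|$ is coprime to $p$ by assumption; hence $C_U(a)=1$. For the normalizer condition, the key observation is that $C_G(A)$ permutes $S_p(G,A)$ by conjugation: if $c\in C_G(A)$ and $P\in S_p(G,A)$, then $P^c$ is again a Sylow $p$-subgroup, and it is $A$-invariant because each $a\in A$ fixes $c$ and stabilizes $P$, so the image of $P^c$ under $a$ equals $(c^{-1}Pc)^a=c^{-1}Pc=P^c$. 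Hence conjugation by $c$ permutes the members of $S_p(G,A)$ and therefore fixes their intersection, so $C_G(A)$ normalizes $U$.

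Finally, the exceptional-configuration hypotheses (i) and (ii) in the corollary are word for word those of Theorem~\ref{E}, so that theorem applies to $U$ and yields $U\leq O_p(G)$. Together with the inclusion $O_p(G)\leq U$ from the first step this gives $(O,A)_p(G)=U=O_p(G)$, as desired.

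I expect the only genuinely substantive point to be the verification that $C_G(A)$ normalizes $U$, i.e.\ that conjugation by a fixed point permutes the $A$-invariant Sylow $p$-subgroups; everything else is a direct translation of the coprimeness hypothesis (giving $C_U(a)=1$) together with the verbatim match of the Lie-type freeness conditions, after which Theorem~\ref{E} does all the work. It is worth remarking that the hypothesis here, that $p$ is coprime to $|C_G(a)|$ for \emph{every} nontrivial $a$, is exactly what is needed to force $C_U(a)=1$ for all such $a$, rather than merely $p\nmid|C_G(A)|$.
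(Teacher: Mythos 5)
Your proposal is correct and follows essentially the same route as the paper's own proof: both establish $O_p(G)\leq (O,A)_p(G)$, observe that $C_G(A)$ normalizes the intersection of the $A$-invariant Sylow $p$-subgroups, use the coprimality of $p$ with each $|C_G(a)|$ to force $C_U(a)=1$, and then invoke Theorem E. The paper leaves the normalization step as ``easy to see''; your explicit verification that conjugation by a fixed point permutes $S_p(G,A)$ is exactly the intended argument.
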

	\begin{proof} We clearly have
		$O_p(G)\leq (O, A)_p(G)$. On the other hand, it is easy to see that $(O,A)_p(G)$ is
		normalized by $C_G(A)$. Since $p$ is coprime to $|C_G(a)|$ for all $1\ne a\in A$, we have $(O,A)_p(G)\leq O_p(G)$ by
		Theorem E as claimed.
	\end{proof}

	\section*{Acknowledgement} We thank Richard Lyons and Stephan Kohl whose remarks are directly applied to the proof of Lemma \ref{3 cases}.

\end{document}